\newtheorem{theorem}{Theorem}[section]
\newtheorem{lemma}[theorem]{Lemma}
\newtheorem{prop}[theorem]{Proposition}
\theoremstyle{definition}
\theoremstyle{remark}
\newtheorem{remark}[theorem]{Remark}
\newtheorem{example}[theorem]{Example}
\crefname{equation}{}{}
\Crefname{enumi}{}{}
\DeclareMathOperator{\id}{id}
\DeclareMathOperator{\card}{card}
\DeclareMathOperator{\B}{B}
\DeclareMathOperator{\T}{T}
\newcommand{\BB}{\mathbb{B}}
\newcommand{\Borel}{\mathcal{B}}
\newcommand{\FF}{\mathcal{F}}
\newcommand{\FP}{\mathcal{P}}
\newcommand{\FI}{\mathcal{I}}
\newcommand{\FPS}{\mathcal{P}_{S}}
\newcommand{\FPSp}{\mathcal{P}_{S_p}}
\newcommand{\FPSdwa}{\mathcal{P}_{S_2}}
\newcommand{\FM}{\mathcal{M}}
\newcommand{\FMS}{\mathcal{M}_{S}}
\newcommand{\FMSp}{\mathcal{M}_{S_p}}
\newcommand{\clopen}[2]{\char"5B #1,#2\char"29}
\newcommand{\opencl}[2]{\char"28 #1,#2\char"5D}
\newcommand{\field}[1]{\mathbb{#1}}
\def\cref@thmoptarg[#1]#2#3#4{%
    \ifhmode\unskip\unskip\par\fi%
    \normalfont%
    \trivlist%
    \let\thmheadnl\relax%
    \let\thm@swap\@gobble%
    \thm@notefont{\fontseries\mddefault\upshape}%
    \thm@headpunct{.}
    \thm@headsep 5\p@ plus\p@ minus\p@\relax%
    \thm@space@setup%
    #2
    \@topsep \thm@preskip               
    \@topsepadd \thm@postskip           
    \def\@tempa{#3}\ifx\@empty\@tempa%
      \def\@tempa{\@oparg{\@begintheorem{#4}{}}[]}%
    \else%
      \refstepcounter[#1]{#3}
      \@namedef{cref@#3@alias}{#1}
      \def\@tempa{\@oparg{\@begintheorem{#4}{\csname the#3\endcsname}}[]}%
    \fi%
    \@tempa}%
\crefname{theorem}{Theorem}{Theorems}
\crefname{lemma}{Lemma}{Lemmas}
\crefname{proposition}{Proposition}{Propositions}
\crefname{prop}{Proposition}{Propositions}
\crefname{corollary}{Corollary}{Corollaries}
\crefname{definition}{Definition}{Definitions}
\begin{document}
\title[Invariant probability measures]{Invariant probability measures under $p$-adic transformations}
\author[O.V.Maslyuchenko]{Oleksandr V. Maslyuchenko \orcidlink{0000-0002-1493-9399}}
\address{Institute of Mathematics\\ University of Silesia\\ Bankowa 14\\ 40-007 Katowice\\ Poland}
\address{Department of Mathematics and Informatics\\ Yuriy Fedkovych Chernivtsi National University\\ Kotsiubynskoho  2\\ 58-012 Chernivtsi\\ Ukraine}
\email{oleksandr.maslyuchenko@us.edu.pl}
\author[J.Morawiec]{Janusz Morawiec \orcidlink{0000-0002-0310-867X}}
\address{Institute of Mathematics\\ University of Silesia\\ Bankowa 14\\ 40-007 Katowice\\ Poland}
\email{janusz.morawiec@us.edu.pl}
\author[T.Zürcher]{Thomas Zürcher \orcidlink{0000-0001-9179-9521}}
\address{Institute of Mathematics\\ University of Silesia\\ Bankowa 14\\ 40-007 Katowice\\ Poland}
\email{thomas.zurcher@us.edu.pl}

\dedicatory{In memory of J\'{a}nos Acz\'{e}l on the occasion of the $100^{th}$ anniversary of his birth}

\subjclass{Primary 37E05, 39B12; Secondary 26A30, 28D05}
\keywords{invariant measures, iterative functional equations, singular functions, jump functions, beta-transformations}

\begin{abstract}
It is well-known that the Lebesgue measure is the unique absolutely continuous invariant probability measure under the $p$-adic transformation. The purpose of this paper is to characterize the family of all invariant probability measures under the $p$-adic transformation and to provide some description of them. In particular, we describe the subfamily of all atomic invariant measures under the $p$-adic transformation as well as the subfamily of all continuous and singular invariant probability measures under the $p$-adic transformation.
Iterative functional equations play the base role in our considerations.
\end{abstract}
\maketitle


\renewcommand{\theequation}{1.\arabic{equation}}\setcounter{equation}{0}
\section{Introduction}\label{S1} 

The dyadic transformation is an example of the simplest deterministic dynamical system that has so-called chaotic dynamics. It is a special case of the so called $\beta$-transformation, which has been studied by many authors during the last 70 years. The $\beta$-transformation was introduced in~\cite{Renyi1957}, where it was proved that there exists exactly one probability measure that is absolutely continuous (with respect to the Lebesgue measure) and invariant under the $\beta$-transformation. An explicit formula for this invariant probability measure was obtained in~\cite{Gelfond1959} and, independently, in~\cite{Parry1960}, where it was also proved that the $\beta$-transformation is weakly mixing, and hence ergodic. Exactness of the $\beta$-transformation was demonstrated in~\cite{Rohlin1961}.
Additional information on the $\beta$-transformation with extensive literature and broadly discussed topics connected with it can be found in~\cite{Vapstas2024}. 

Results about the existence of the unique absolutely continuous invariant probability measure under the $\beta$-transformation, on its explicit formula and ergodicity behaviour have been motivation for many authors to extend them for more general transformations but still connected with the original $\beta$-transformations (see e.g.\ \cite{TanakaIto1982, DajaniKraaikamp2003, Gora2007, DajaniVries2007, DajaniKalle2010, Kempton2014, KalleMaggioni2022, Suzuki2024}). All the results on the existence of absolutely continuous invariant probability measures are based on a method introduced in~\cite{LasotaYork1973} for deterministic dynamical systems.
These methods were later effectively extended to random dynamical systems (see e.g.\ \cite{Buzzi2000, GoraBoyarski2003, BahsounGora2005, Inoue2012}). 

In many situations, there are also continuous and singular invariant measures or atomic invariant probability measures among the absolutely continuous invariant probability measures under a given transformation.
Moreover, the family of all invariant probability measures under a given transformation can be quite large, however among them, there exists only one invariant probability measure that is absolutely continuous. An example of such a situation is the $p$-adic transformation, which we will consider in this paper. The problem with determining invariant probability measures under a given transformation that are not absolutely continuous is that there is no method for this purpose. To the best of our knowledge, one can get some information on singular invariant probability measures considering the iterated function system associated with the given transformation (if it is even possible) and to show that its attractor is of Lebesgue measure zero. However, by this way we may obtain only very specific singular invariant probability measures.

The purpose of this paper is to describe the family of all invariant probability measures under the $p$-adic transformation. As we know that the family has exactly one absolutely continuous measure (which is exactly the Lebesgue measure) it suffices to determine its subfamily of all atomic measures and its subfamily of all continuous and singular measures. Let us note that large families of continuous and singular invariant probability measures under the dyadic transformation were determined in~\cite{MorawiecZurcher2018, MorawiecZurcher2021, MorawiecZurcher2024}. 

Our main tool in this paper is the functional equation associated with the probability distribution functions of invariant measures that is slightly different from that which arises considering density functions of invariant measures (see e.g.~\cite{Suzuki2019}). 

The main part of this paper is~\cref{S6}, in which we describe the structure of
atomic invariant probability measures under the $p$-adic transformation that are supported on minimal finite orbits. As noted in the introduction of~\cite{DownarowiczHuczek2022} such a description can help in progresses of solving the  Furstenberg $\times 2,\times 3$ conjecture, posed in~\cite{Furstenberg1967}, that is one of the major unsolved problems in the field of ergodic theory in dynamical systems.

\renewcommand{\theequation}{2.\arabic{equation}}\setcounter{equation}{0}
\section{Preliminaries}\label{S2}

Denote by $\Borel$ the $\sigma$-algebra of all Borel subsets of $\clopen{0}{1}$ and by $\FF$ the family of all $\Borel$-measurable maps $\Phi\colon\clopen{0}{1}\to\clopen{0}{1}$. Any measure defined on $\Borel$ is called a \emph{Borel measure} and any $\Phi\in\FF$ is called a \emph{Borel transformation}. 
A Borel probability measure $\mu$ is said to be \emph{invariant under $\Phi\in\FF$} if 
\begin{equation*}
\mu(\Phi^{-1}(B))=\mu(B)\quad\text{for every }B\in\Borel.
\end{equation*}

Throughout this paper, we fix an integer number $p\geq 2$ and consider the \emph{$p$-adic transformation} $S_p\in\FF$ defined by 
\begin{equation*}
S_p(x)=px\hspace*{.5ex}(\hspace*{-2ex}\mod 1).
\end{equation*} 

The aim of this paper is to characterize the set of all invariant measures under $S_p$ and provide some description of them. However, we will formulate some results for more general transformations than $S_p$. For this purpose, we fix numbers $a_1,\ldots,a_{p+1}\in[0,1]$ such that $0=a_1<a_2<\cdots<a_{p+1}=1$. Next, for every $k\in\{1,\ldots,p\}$ we fix an increasing bijection $f_k\colon[a_k,a_{k+1}]\to[0,1]$ and consider $S\in\FF$ defined by
\begin{equation*}
S(x)=f_k(x)\quad\text{ for }x\in\clopen{a_k}{a_{k+1}}\text{ with }k\in\{1,\ldots,p\}.
\end{equation*}
The definition of $S$ does not require defining the functions $f_1,\ldots,f_p$ on the right endpoints of their domains, however it will be important in~\cref{S3} to simplify writing.
Note that in the case where $S$ is  the $p$\nobreakdash-adic transformation~$S_p$, we have $a_{k+1}-a_k=\frac{1}{p}$ and $f_k(x)=px-k+1$ for every $k\in\{1,\ldots,p\}$. 

Denote by $\FM$ the family of all Borel probability measures and put
\begin{equation*}
\FMS=\big\{\mu\in\FM\mid\mu(S^{-1}(B))=\mu(B)\text{ for every }B\in\Borel\big\}.
\end{equation*}

We begin with two simple observations on $\FMS$. 

\begin{remark}\label{rem:Continuity}
Every $\mu\in\FMS$ vanishes on each point of the set $\{a_2,\ldots,a_{p}\}$.
In particular, if $\mu\in\FMSp$, then $\mu(\{\frac{k}{p}\})=0$ for every $k\in\{1,\ldots,p-1\}$.
\end{remark}

Denote by $\delta_x$ the Dirac measure concentrated at the point $x\in\mathbb R$.

\begin{remark}\label{rem:FixedPoints}
Assume that $x\in[0,1]$. Then $\delta_x\in\FMS$ if and only if $S(x)=x$.
In particular, $\delta_x\in\FMSp$ if and only if $x\in\{\frac{k}{p-1}\mid k\in\{0,\ldots,p-2\}\}$. 
\end{remark}


\renewcommand{\theequation}{3.\arabic{equation}}\setcounter{equation}{0}
\section{Invariant measures vs.\ functional equations}\label{S3}

Put
\begin{align*}
\FI&=\left\{\phi\colon[0,1]\to[0,1]\mid \phi \text{ is non-decreasing, }\phi(0)=0,\text{ and } \phi(1)=1\right\},\\
\FP&=\left\{\phi\in\FI\mid\phi \text{ is left-continuous}\right\}.
\end{align*}
Recall (cf.\ e.g.~\cite[Theorem~12.4]{Billingsley1995} or~\cite[9.1.1]{Dudley2002} and note that left- and right-continuous functions correspond to each other) that $\FP$ and $\FM$ are in one-to-one correspondence by the formula
\begin{equation}\label{mu-phi}
\phi(x)=\mu(\clopen{0}{x})\quad\text{ for every }x\in[0,1].
\end{equation}

\begin{prop}\label{prop:FunctionalEquation}
Assume that $\mu\in\FM$ and let $\phi\in\FP$ correspond to $\mu$ by~\cref{mu-phi}. Then $\mu\in\FMS$ if and only if 
\begin{equation}\label{FE}
\phi(x)=\sum_{k=1}^{p}\left[\phi(f_k^{-1}(x))-\phi(f_k^{-1}(0))\right]\quad\text{for every }x\in[0,1].
\end{equation}
\end{prop}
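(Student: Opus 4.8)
The plan is to exploit the one-to-one correspondence between $\FP$ and $\FM$ given by \cref{mu-phi} to translate the invariance of $\mu$ into a condition on $\phi$, and to reduce checking invariance on all of $\Borel$ to checking it on the generating $\pi$-system of initial intervals $\clopen{0}{x}$. Both directions of the equivalence will come out of a single explicit computation of $S^{-1}(\clopen{0}{x})$.

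First I would compute that preimage for an arbitrary $x\in[0,1]$. Since $S$ agrees on $\clopen{a_k}{a_{k+1}}$ with the increasing bijection $f_k\colon[a_k,a_{k+1}]\to[0,1]$, and since $f_k^{-1}(0)=a_k$ and $f_k^{-1}(1)=a_{k+1}$, a short case analysis ($x=0$, $x\in(0,1)$, and $x=1$) shows that $\{y\in\clopen{a_k}{a_{k+1}}:f_k(y)<x\}=\clopen{f_k^{-1}(0)}{f_k^{-1}(x)}$ in every case. Because the intervals $\clopen{a_k}{a_{k+1}}$ are pairwise disjoint, this gives the disjoint decomposition
\begin{equation*}
S^{-1}(\clopen{0}{x})=\bigcup_{k=1}^{p}\clopen{f_k^{-1}(0)}{f_k^{-1}(x)}.
\end{equation*}
Applying $\mu$ and using $\mu(\clopen{a}{b})=\phi(b)-\phi(a)$, which follows from \cref{mu-phi}, then yields $\mu(S^{-1}(\clopen{0}{x}))=\sum_{k=1}^{p}\left[\phi(f_k^{-1}(x))-\phi(f_k^{-1}(0))\right]$, whereas $\mu(\clopen{0}{x})=\phi(x)$. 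Hence \cref{FE} is exactly the statement that $\mu$ is $S$-invariant on every set of the form $\clopen{0}{x}$.

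The forward implication is then immediate: if $\mu\in\FMS$, then in particular $\mu(S^{-1}(\clopen{0}{x}))=\mu(\clopen{0}{x})$ for all $x$, which is \cref{FE}. For the converse I would note that $\mathcal{C}=\{\clopen{0}{x}:x\in[0,1]\}$ is a $\pi$-system generating $\Borel$ and containing the whole space $\clopen{0}{1}$ (take $x=1$). The pushforward $\mu\circ S^{-1}$ is again a Borel probability measure, since $S$ maps $\clopen{0}{1}$ into itself and therefore $S^{-1}(\clopen{0}{1})=\clopen{0}{1}$, and \cref{FE} says that $\mu\circ S^{-1}$ and $\mu$ agree on $\mathcal{C}$. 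The standard uniqueness theorem for two finite measures agreeing on a generating $\pi$-system then forces $\mu\circ S^{-1}=\mu$ on all of $\Borel$, that is, $\mu\in\FMS$.

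The only delicate point is the preimage computation, where the boundary values must be tracked carefully. The half-open form of $\clopen{0}{x}$ is precisely what makes the decomposition valid uniformly in $x$ (in particular at $x=1$, where each piece fills out all of $\clopen{a_k}{a_{k+1}}$), and this is what dovetails with the left-continuity built into $\FP$; this is also where the convention of defining the $f_k$ on the right endpoints pays off. Everything else is a routine application of the measure-uniqueness machinery.
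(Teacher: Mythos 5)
Your proof is correct and takes essentially the same approach as the paper's: the forward implication rests on the same disjoint decomposition $S^{-1}(\clopen{0}{x})=\bigcup_{k=1}^{p}\clopen{f_k^{-1}(0)}{f_k^{-1}(x)}$, and the converse invokes the same uniqueness theorem for measures agreeing on a generating $\pi$-system (the paper cites Billingsley's Theorem~3.3, passing through the intervals $\clopen{x}{y}$ rather than using the initial intervals $\clopen{0}{x}$ directly, which is an immaterial variation). Your explicit tracking of the boundary cases $x=0$ and $x=1$ in the preimage computation is a small bit of extra care that the paper leaves implicit.
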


\begin{proof}
($\Rightarrow$) Fix $\mu\in\FMS$ and $x\in[0,1]$. 
Applying~\cref{mu-phi}, we obtain
\begin{align*}
\phi(x)&=\mu(\clopen{0}{x})=\mu\left(S^{-1}(\clopen{0}{x})\right)
=\sum_{k=1}^{p}\mu\left(\left[a_k,f_k^{-1}(x)\right)\right)\\
&=\sum_{k=1}^{p}\left[\mu\left(\left[0,f_k^{-1}(x)\right)\right)
-\mu\left([0,a_k)\right)\right]
=\sum_{k=1}^{p}\left[\phi\left(f_k^{-1}(x)\right)-\phi(a_k)\right]\\
&=\sum_{k=1}^{p}\left[\phi(f_k^{-1}(x))-\phi(f_k^{-1}(0))\right].
\end{align*}

($\Leftarrow$) Fix $\phi\in\mathcal P$ satisfying \cref{FE}. Applying \cref{mu-phi}, for every $y\in[0,1)$, we get 
\begin{equation*}
\mu(S^{-1}([0,y)))=\mu([0,y)).
\end{equation*} 
Hence for all $x,y\in(0,1)$ with $x<y$ we have
\begin{align*}
\mu(S^{-1}(\clopen{x}{y}))&=\mu(S^{-1}([0,y)))-\mu(S^{-1}([0,x)))=\mu([0,y))-\mu([0,x))\\
&=\mu(\clopen{x}{y}).
\end{align*}
Now, it suffices to apply \cite[Theorem~3.3]{Billingsley1995}.
\end{proof}

In the case of the $p$\nobreakdash-adic transformation \cref{FE} takes the form
\begin{equation}\label{FESp}
\phi(x)=\sum_{k=0}^{p-1}\left[\phi\left(\frac{x+k}{p}\right)
-\phi\left(\frac{k}{p}\right)\right]\quad\text{for every }x\in[0,1].
\end{equation}

Put
\begin{equation*}
\FPS=\left\{\phi\in\FP\mid\phi\text{ satisfies }\cref{FE}\right\}.
\end{equation*}
By \cref{prop:FunctionalEquation} each $\phi\in\FPS$ determines exactly one $\mu\in\FMS$ and each $\mu\in\FMS$ is determined by some $\phi\in\FPS$. Therefore, we see that for describing $\FMS$, it suffices to describe $\FPS$. Clearly, any $\phi\in\FPS$ can be decomposed in a canonical way into the absolutely continuous part, the continuous and singular part, and the jump part, i.e.\ the function defined by $\phi_j(x)=\sum_{y\leq x}[\lim_{z\to y+}\phi(z)-\phi(y)]$ for every $x\in[0,1]$  (see~\cite[page~58]{LebesgueNew}, cf.~\cite[page~139]{AmbrosioFuscoPallara2000}).
Note that by a jump function we  understand any $\phi\in\FPS$ for which the measure $\mu\in\FMS$ that corresponds to $\phi$ by~\cref{mu-phi} is atomic (or discrete), i.e.\ there are a countable (finite or infinite) set $\{x_j\mid j\in J\}\subset[0,1)$ and a sequence $(\alpha_j)_{j\in J}$ with $\sum_{j\in J}\alpha_j=1$ such that $\mu=\sum_{j\in J}\alpha_j\delta_{x_j}$. 

We say that the transformation $\Phi\in\FF$ is \emph{nonsingular} if $\Phi^{-1}(B)$ is of measure zero whenever $B\in\Borel$ is of measure zero (cf.~\cite[Definition~3.2.2]{LasotaMackey1994}). Note that each $p$\nobreakdash-adic transformation is nonsingular. 

\begin{prop}\label{prop:ASJ}
Assume that $S$ is nonsingular and sends sets of measure zero to sets of measure zero. If $\phi\in\FPS$, then its absolutely continuous, continuous and singular, and jump parts satisfy \cref{FE}.	
\end{prop}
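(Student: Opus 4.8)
The plan is to pass from the distribution function to the corresponding measure, split that measure into its three canonical parts, show that each part is separately $S$-invariant, and then translate back. Write $S_*\nu$ for the pushforward of a finite Borel measure $\nu$, i.e.\ $S_*\nu(B)=\nu(S^{-1}(B))$ for $B\in\Borel$; with this notation $\mu\in\FMS$ means exactly $S_*\mu=\mu$. Let $\mu=\mu_{ac}+\mu_s+\mu_j$ be the decomposition of $\mu$ into its absolutely continuous, continuous singular, and atomic parts, and let $\phi_{ac},\phi_s,\phi_j$ be the distribution functions attached to them through~\cref{mu-phi}; these are precisely the three parts of $\phi$. Since the forward computation in the proof of~\cref{prop:FunctionalEquation} uses only finite additivity and the identity $S^{-1}(\clopen{0}{x})=\bigcup_{k=1}^{p}[a_k,f_k^{-1}(x))$, it remains valid for any finite Borel measure; hence it suffices to prove that each of $\mu_{ac},\mu_s,\mu_j$ is $S$-invariant.

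The key step is to check that $S_*$ respects the three classes. By additivity $S_*\mu=S_*\mu_{ac}+S_*\mu_s+S_*\mu_j$, and I claim that the three summands are again absolutely continuous, continuous singular, and atomic, respectively. If $B\in\Borel$ is Lebesgue-null, then $S^{-1}(B)$ is Lebesgue-null because $S$ is nonsingular, so $\mu_{ac}(S^{-1}(B))=0$; thus $S_*\mu_{ac}$ is absolutely continuous. For the atomic part, $S_*\delta_x=\delta_{S(x)}$, so $S_*\mu_j$ is again a countable convex combination of Dirac measures. For the continuous singular part there are two things to verify. First, $S_*\mu_s$ is non-atomic: for each $y$ the set $S^{-1}(\{y\})=\{f_1^{-1}(y),\dots,f_p^{-1}(y)\}$ is finite, and a non-atomic measure annihilates finite sets, so $S_*\mu_s(\{y\})=\mu_s(S^{-1}(\{y\}))=0$. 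Second, $S_*\mu_s$ is singular: if $\mu_s$ is concentrated on a Lebesgue-null set $N$, then $S(N)$ is Lebesgue-null---this is where I use that $S^{-1}$ is nonsingular, which amounts to each inverse branch $f_k^{-1}$ being nonsingular and hence to $S$ carrying Lebesgue-null sets to Lebesgue-null sets---so $S_*\mu_s$ is concentrated on a measurable hull of $S(N)$.

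With the key step in hand I conclude by uniqueness. Both $\mu=\mu_{ac}+\mu_s+\mu_j$ and $S_*\mu=S_*\mu_{ac}+S_*\mu_s+S_*\mu_j$ express the same measure $\mu=S_*\mu$ as a sum of an absolutely continuous, a continuous singular, and an atomic measure; since such a decomposition is unique, $S_*\mu_{ac}=\mu_{ac}$, $S_*\mu_s=\mu_s$, and $S_*\mu_j=\mu_j$. Each part is therefore $S$-invariant, and the forward implication of~\cref{prop:FunctionalEquation} applied to each part shows that $\phi_{ac}$, $\phi_s$, and $\phi_j$ all satisfy~\cref{FE}.

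The delicate point---and the place where both nonsingularity hypotheses are genuinely needed---is the preservation of the continuous singular class: nonsingularity of $S$ keeps the absolutely continuous part absolutely continuous, while nonsingularity of $S^{-1}$ (read as the Luzin property~(N) for $S$, coming from nonsingularity of the branches $f_k^{-1}$) is exactly what guarantees that the singular part stays singular. The remaining care is bookkeeping: the parts are finite measures that need not be probabilities, and one should keep an eye on the right endpoint $x=1$, but neither affects the argument above.
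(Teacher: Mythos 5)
Your proof is correct, but it takes a genuinely different route: it runs on the measure side, whereas the paper's proof runs entirely on the function side. The paper decomposes $\phi$ itself into $\phi_a+\phi_s+\phi_j$, shows that the operator $\phi\mapsto\sum_{k=1}^{p}\phi\circ f_k^{-1}$ preserves each of the three function classes (invoking the Banach--Zarecki theorem for absolute continuity), and then applies uniqueness of the canonical decomposition of monotone functions; since that uniqueness only determines the parts up to additive constants, the paper must chase down the constants $c$ and $d$ by evaluating at $0$. You instead decompose the invariant measure $\mu=\mu_{ac}+\mu_s+\mu_j$, verify that the pushforward $S_*$ preserves each class --- nonsingularity of $S$ for the absolutely continuous part, finiteness of the fibers $S^{-1}(\{y\})$ for non-atomicity, and the Luzin property (N) of $S$ (your reading of nonsingularity of $S^{-1}$ through the branches $f_k^{-1}$, which is the correct reading, since $S^{-1}$ is not itself a map in $\FF$) for singularity --- and conclude $S_*\mu_{ac}=\mu_{ac}$, $S_*\mu_s=\mu_s$, $S_*\mu_j=\mu_j$ from uniqueness of the Lebesgue decomposition together with $S_*\mu=\mu$. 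Your route buys three things: the class-preservation checks become elementary one-liners with no Banach--Zarecki; the additive constants disappear, because each distribution function $x\mapsto\mu_{\mathrm{part}}(\clopen{0}{x})$ vanishes at $0$ automatically; and you make explicit exactly where each nonsingularity hypothesis is consumed, which the paper leaves implicit in its ``easy to check'' claims for the singular and jump classes (preservation of the singular class there needs exactly the same property (N) of $S$ that you isolate). What the paper's route buys is that it never leaves the functional-equation setting, so it does not rely on the dictionary between the canonical parts of $\phi$ and the Lebesgue parts of $\mu$ under \cref{mu-phi} --- a standard fact that you assert without proof, which is acceptable. You also correctly flag the one piece of necessary bookkeeping: the parts are finite measures rather than probabilities, so the forward implication of \cref{prop:FunctionalEquation} cannot be cited as stated but must be re-read for finite measures, which is legitimate since, as you observe, that computation uses only finite additivity and the identity $S^{-1}(\clopen{0}{x})=\bigcup_{k=1}^{p}[a_k,f_k^{-1}(x))$.
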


\begin{proof}
Fix $\phi\in\FPS$. By the canonical decomposition (see \cite[Chapter 16, Section~E]{Jones1993}, cf.~\cite[Theorem~7.1.45]{KannanKrugerKing1996}) there exist exactly one (non-decreasing) absolutely continuous function $\phi_a$, exactly one (non-decreasing) continuous and singular function $\phi_s$, and exactly one (non-decreasing) jump function $\phi_j$ such that $\phi_a(0)=\phi_s(0)=0$ and
\begin{align*}
\phi_a(x)+\phi_s(x)+\phi_j(x)&=\phi(x)=\sum_{k=1}^{p}\left[\phi(f_k^{-1}(x))-\phi(f_k^{-1}(0))\right]\\
&=\sum_{k=1}^{p}\phi_a(f_k^{-1}(x))+\sum_{k=1}^{p}\phi_s(f_k^{-1}(x))\\
&\quad+\sum_{k=1}^{p}\phi_j(f_k^{-1}(x))-\sum_{k=1}^{p}\phi(f_k^{-1}(0))
\end{align*} 
for every $x\in[0,1]$.
To see that $\sum_{k=1}^{p}\phi_a\circ f_k^{-1}$ is absolutely continuous, one can apply the Banach--Zarecki theorem (see \cite[Theorem~7.14]{BrucknerBrucknerThomson} or \cite[Theorem 7.1.38]{KannanKrugerKing1996}).
It is easy to check that the function $\sum_{k=1}^{p}\phi_s\circ f_k^{-1}$ is continuously singular and $\sum_{k=1}^{p}\phi_j\circ f_k^{-1}$ is a jump function. Then from the uniqueness in the canonical decomposition there exists a constant $c\in\mathbb R$ such that for every $x\in[0,1]$ we have 
\begin{equation*}
\phi_a(x)=\sum_{k=1}^{p}\phi_a(f_k^{-1}(x))-\sum_{k=1}^{p}\phi(f_k^{-1}(0))+c
\end{equation*}
and since
$0=\phi_a(0)=\sum_{k=1}^{p}\phi_a(f_k^{-1}(0))-\sum_{k=1}^{p}\phi(f_k^{-1}(0))+c$
we see that $\phi_a$ satisfies \cref{FE} and 
\begin{equation*}
\phi_s(x)+\phi_j(x)=\sum_{k=1}^{p}\phi_s(f_k^{-1}(x))
+\sum_{k=1}^{p}\phi_j(f_k^{-1}(x))-c.
\end{equation*}
Again by the  canonical decomposition there exists a constant $d\in\mathbb R$ such that for every $x\in[0,1]$ we have
\begin{equation*}
\phi_s(x)=\sum_{k=1}^{p}\phi_s(f_k^{-1}(x))-c+d
\end{equation*}
and since $0=\phi_s(0)=\sum_{k=1}^{p}\phi_s(f_k^{-1}(0))-c+d$
we see that $\phi_s$ satisfies \cref{FE} and
\begin{equation*}
\phi_j(x)=\sum_{k=1}^{p}\phi_j(f_k^{-1}(x))-d.
\end{equation*}
Finally, since $0=\phi(0)=\phi_a(0)+\phi_s(0)+\phi_j(0)=\phi_j(0)$ we conclude that $0=\phi_j(0)=\sum_{k=1}^{p}\phi_j(f_k^{-1}(0))-d$, and hence $\phi_j$ satisfies \cref{FE}.
\end{proof}

From \Cref{prop:ASJ} we see that for describing the family $\FPS$ it suffices to describe each of its three subfamilies:
\begin{align*}
\FPS^\textnormal{a}&=\{\phi\in\FPS\mid\phi\text{ is an absolutely continuous function}\},\\
\FPS^\textnormal{s}&=\{\phi\in\FPS\mid\phi\text{ is a continuous and singular function}\},\\
\FPS^\textnormal{j}&=\{\phi\in\FPS\mid\phi\text{ is a jump function}\}.\\
\end{align*}


\renewcommand{\theequation}{4.\arabic{equation}}\setcounter{equation}{0}
\section{The family \texorpdfstring{$\mathcal P_S$}{Pₛ}}\label{S4}

Although we know that it is sufficient to describe the families $\FPS^\textnormal{a}$, $\FPS^\textnormal{s}$, and $\FPS^\textnormal{j}$, we will start with a general description of the family $\FPS$. 

In this section, the main tool are Banach limits.
Recall that a Banach limit $B\colon l^\infty\to\mathbb R$ is a linear and continuous extension of the functional $\lim\colon c \to\mathbb R$, where $l^\infty$ is the space of all bounded sequences of real numbers, and $c$ is its subspace consisting of all convergent sequences. In other words, a Banach limit is a linear, positive, shift invariant and normalized functional defined on~$l^\infty$. 
The reader interested in Banach limits can consult, e.g., \cite{AlekhnoSemenovSukochevUsachev2018, SemenovSukochevUsachev2019, SemenovSukochevUsachev2020, Sofi2021, DasNanda2022, SemenovSukochevUsachev2023} and the references therein.

From now on, we fix a Banach limit $\B$.

Define $\T_S\colon \FI\to\FI$ putting
\begin{equation*}
\T_S\phi(x)=\sum_{k=1}^{p}\left[\phi(f_k^{-1}(x))-\phi(f_k^{-1}(0))\right]
\quad\text{for every }x\in[0,1].
\end{equation*}
Obviously,
\begin{equation}\label{TSpsi}
\T_S\phi=\phi\quad\text{for every }\phi\in\FPS.
\end{equation}

With any $\phi\in\FI$ we associate the function $\B_S^\phi\colon[0,1]\to[0,1]$ defined by
\begin{equation*}
\B_S^\phi(x)=\B((\T_S^m\phi(x))_{m\in\mathbb N}).
\end{equation*} 
Clearly, $\B_S^\phi\in\FI$. However, the function $\B_S^\phi$ may not be left-continuous. Moreover, it can happen that $\lim_{x\to 1-}\B_S^\phi(x)\in[0,1)$. 
To get the left-continuity, we need to modify the just defined function $\B_S^\phi$. Therefore, we define the function $\BB_S^\phi\colon[0,1]\to[0,1]$ putting
\begin{equation*}
\BB_S^\phi=\B_S^\phi\quad\hbox{in the case }\lim_{x\to 1-}\B_S^\phi(x)=0
\end{equation*}	
and 
\begin{equation*}
\BB_S^\phi(x)=\begin{cases*}
0,&if $x=0$,\\
\frac{1}{\alpha}\displaystyle{\lim_{z\to x-}}\B_S^\phi(z),& if $x\in(0,1]$\\
\end{cases*}
\quad\text{in the case }\alpha=\lim_{x\to 1-}\B_S^\phi(x)\in(0,1].
\end{equation*}

Denote by $\chi_A$ the characteristic function of the set $A\subset[0,1]$ 
restricted to $[0,1]$.

\begin{remark}\label{rem:BB}\
\begin{itemize}
\item[\textrm{(i)}] If $\lim_{x\to 1-}\B_S^\phi(x)=0$, then  $\BB_S^\phi=\chi_{\{1\}}$. In particular, $\BB_S^{\chi_{\{1\}}}=\chi_{\{1\}}$.
\item[\textrm{(ii)}] If $\lim_{x\to 1-}\B_S^\phi(x)\in(0,1]$, then $\BB_S^\phi\in\FP$.
\end{itemize}
\end{remark}


Note that $\chi_{\{1\}}$ does not correspond to an invariant measure in our setting.
The reason is that we are considering $[0,1)$.
If we extended $S$ to~$[0,1]$ by setting $S(1)=1$, then $\chi_{\{1\}}$ would correspond to an invariant measure: $\delta_1$.
However, for this extension of~$S$, we would then need to replace the left-continuity with the right-continuity and modify the definition of $\BB_S^\phi$.

\begin{theorem}\label{thm:BB}
We have $\{\BB_S^\phi\mid\phi\in\mathcal I\}=\mathcal P_S\cup\{\chi_{\{1\}}\}$.
\end{theorem}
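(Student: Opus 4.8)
The plan is to prove the two inclusions separately, the whole argument resting on one structural fact: for every $\phi\in\FI$ the function $\B_S^\phi$ is itself a (not necessarily left-continuous) solution of~\cref{FE}. First I would note that $\T_S$ maps $\FI$ into $\FI$: each $f_k^{-1}$ is increasing, so $\T_S\phi$ is non-decreasing, while $\T_S\phi(0)=0$ and the telescoping sum $\sum_{k=1}^{p}[\phi(a_{k+1})-\phi(a_k)]=1$ give the normalization. Hence $(\T_S^m\phi(x))_{m\in\mathbb N}$ is a bounded sequence in $[0,1]$ for each fixed $x$, so $\B_S^\phi$ is well defined; monotonicity of the Banach limit together with $\T_S^m\phi(0)=0$ and $\T_S^m\phi(1)=1$ shows $\B_S^\phi\in\FI$. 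The crucial point is that $\B_S^\phi$ satisfies~\cref{FE}: using the linearity of $\B$ to pull the finite sum and the subtraction inside, the quantity $\sum_{k=1}^{p}[\T_S^m\phi(f_k^{-1}(x))-\T_S^m\phi(f_k^{-1}(0))]$ is exactly $\T_S^{m+1}\phi(x)$, and then the shift invariance of $\B$ removes the index shift, yielding $\T_S\B_S^\phi=\B_S^\phi$.

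For the inclusion $\FPS\cup\{\chi_{\{1\}}\}\subseteq\{\BB_S^\phi\mid\phi\in\FI\}$ I would argue as follows. If $\psi\in\FPS$, then~\cref{TSpsi} makes the iterates constant, $\T_S^m\psi=\psi$, so $\B_S^\psi=\B((\psi(x))_{m\in\mathbb N})=\psi$; since $\psi$ is left-continuous and, being the distribution function of a probability measure on $\clopen{0}{1}$, has full mass $\lim_{x\to1-}\psi(x)=1$, we get $\alpha=1$ and one checks that $\BB_S^\psi=\psi$. For $\chi_{\{1\}}$ one verifies directly that $\chi_{\{1\}}\in\FI$ is a fixed point of $\T_S$ (only the branch $k=p$ contributes, and it reproduces $\chi_{\{1\}}$), whence $\B_S^{\chi_{\{1\}}}=\chi_{\{1\}}$, $\lim_{x\to1-}\B_S^{\chi_{\{1\}}}(x)=0$, and~\cref{rem:BB}~(i) gives $\BB_S^{\chi_{\{1\}}}=\chi_{\{1\}}$. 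For the reverse inclusion I fix $\phi\in\FI$ and split according to~\cref{rem:BB}: if $\lim_{x\to1-}\B_S^\phi(x)=0$ then $\BB_S^\phi=\chi_{\{1\}}$, and otherwise it remains to show that $\BB_S^\phi\in\FP$ and that $\BB_S^\phi$ satisfies~\cref{FE}, i.e.\ that $\BB_S^\phi\in\FPS$.

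This last step is where I expect the real work. Multiplying by the constant $1/\alpha$ is harmless, since~\cref{FE} is preserved under scaling; the delicate issue is that passing to the one-sided regularization interacts with the constant terms $\phi(f_k^{-1}(0))=\phi(a_k)$ in~\cref{FE}. Carrying the one-sided limit through $\T_S$ (legitimate because each $f_k^{-1}$ is a continuous increasing bijection) shows that the regularization of $\B_S^\phi$ again solves~\cref{FE} provided $\B_S^\phi$ has no jump at the interior breakpoints $a_2,\ldots,a_p$. I would secure this from the jump form of~\cref{FE}: writing $J(x)=\B_S^\phi(x+)-\B_S^\phi(x-)$ for the saltus, \cref{FE} forces $J(x)=\sum_{k=1}^{p}J(f_k^{-1}(x))$, and evaluating at $x=0$ — whose $S$-preimages are exactly $a_1=0,a_2,\ldots,a_p$ — yields $\sum_{k=2}^{p}J(a_k)=0$, hence $J(a_k)=0$ for $k\in\{2,\ldots,p\}$; this is the mechanism underlying~\cref{rem:Continuity}. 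Combined with the continuity at $1$ recorded in~\cref{rem:BB}~(ii), which is exactly what makes the normalization by $\alpha$ place $\BB_S^\phi$ in $\FP$, this completes the verification and hence the proof.
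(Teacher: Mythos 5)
Your proposal is correct and takes essentially the same route as the paper: the inclusion $\FPS\cup\{\chi_{\{1\}}\}\subset\{\BB_S^\phi\,|\,\phi\in\FI\}$ via the fixed-point identity $\T_S\psi=\psi$ for $\psi\in\FPS$ (together with the direct verification that $\chi_{\{1\}}$ is a fixed point of $\T_S$, which the paper merely asserts), and the reverse inclusion by using linearity and shift invariance of the Banach limit to show that $\B_S^\phi$ itself satisfies \cref{FE}, then regularizing and invoking \cref{rem:BB}. The only difference is one of detail: where the paper compresses the passage from $\B_S^\phi$ to $\BB_S^\phi$ into the words ``in consequence,'' you spell out the two facts this step actually uses --- that the one-sided regularization commutes with the continuous increasing bijections $f_k^{-1}$, and that $\B_S^\phi$ has no jump at the interior breakpoints $a_2,\ldots,a_p$, which your saltus relation evaluated at $x=0$ correctly delivers (the same mechanism underlying \cref{rem:Continuity}).
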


\begin{proof}
($\supset$) 
Since $\chi_{\{1\}}\in\mathcal I$ and $\BB_S^{\chi_{\{1\}}}=\chi_{\{1\}}$, we have $\chi_{\{1\}}\in\{\BB_S^\phi\mid\phi\in\mathcal I\}$.
Fix now $\phi\in\mathcal P_S$ and $x\in[0,1]$. Repeatedly applying~\cref{TSpsi}, we get
\begin{equation*}
\B_S^\phi(x)=\B((T_S^m\phi(x))_{m\in\mathbb N})=\B((\phi(x))_{m\in\mathbb N})=\phi(x).
\end{equation*} 
Hence $\BB_S^\phi=\phi$, which yields $\mathcal P_S\subset\{\BB^\phi_S\mid\phi\in\mathcal I\}$.
	
($\subset$) Fix $\phi\in\mathcal I$ and $x\in[0,1]$. Then
\begin{align*}
\B_S^\phi(x)&=\B\left(\left(\T_S^m\phi(x)\right)_{m\in\mathbb N}\right)
=\B\left(\left(\T_S^{m+1}\phi(x)\right)_{m\in\mathbb N}\right)\\
&=\B\left(\left(\sum_{k=1}^{p}\left[\T_S^m\phi(f_k^{-1}(x))
-\T_S^m\phi(f_k^{-1}(0))\right]\right)_{m\in\mathbb N}\right)\\
&=\sum_{k=1}^{p}\left[\B\left(\left(\T_S^m\phi(f_k^{-1}(x))\right)_{m\in\mathbb N}\right)
-\B\left(\left(\T_S^m\phi(f_k^{-1}(0))\right)_{m\in\mathbb N}\right)\right]\\
&=\sum_{k=1}^{p}\left[\B_S^\phi(f_k^{-1}(x))-\B_S^\phi(f_k^{-1}(0))\right].
\end{align*}
In consequence, according to \cref{rem:Continuity}, we get
\begin{equation*}
\BB_S^\phi=\sum_{k=1}^{p}\left[\BB_S^\phi\circ f_k^{-1}-\BB_S^\phi(f_k^{-1}(0))\right],
\end{equation*}
which jointly with \Cref{rem:BB} yields 
$\{\BB^\phi_S\mid\phi\in\FI\}\subset\FPS\cup\{\chi_{\{1\}}\}$.
\end{proof}

Before showing how \Cref{thm:BB} works in the $p$-adic case, let us note that 
\begin{equation*}
\T_{S_p}\phi(x)=\sum_{k=0}^{p-1}\left[\phi\left(\frac{x+k}{p}\right)
-\phi\left(\frac{k}{p}\right)\right]\quad\text{for every }x\in[0,1]
\end{equation*}
and, by induction,
\begin{equation}\label{Tpm}
\T_{S_p}^m\phi(x)=\sum_{k=0}^{p^m-1}\left[\phi\left(\frac{x+k}{p^m}\right)
-\phi\left(\frac{k}{p^m}\right)\right]\quad\text{for all $m\in\mathbb N$ and $x\in[0,1]$}.
\end{equation}

\begin{example}\label{ex:x^2}
Fix $\phi\in\mathcal I$ of the form $\phi(x)=x^2$. By \cref{thm:BB} we have $\BB_{S_p}^\phi\in\mathcal P_{S_p}\cup\{\chi_{\{1\}}\}$. Fix $x\in[0,1]$. Then using \cref{Tpm} we obtain
\begin{align*}
\B_{S_p}^\phi(x)&=\B((T_{S_p}^m\phi(x))_{m\in\mathbb N})
=\B\left(\left(\sum_{k=0}^{p^m-1}\left[\left(\frac{x+k}{p^m}\right)^2
-\left(\frac{k}{p^m}\right)^2\right]\right)_{m\in\mathbb N}\right)\\
&=\B\left(\left(x+\frac{x^2-x}{p^m}\right)_{m\in\mathbb N}\right)
=\lim_{m\to\infty}\left(x+\frac{x^2-x}{p^m}\right)=x.
\end{align*}
Therefore, $\BB_{S_p}^\phi=\id_{[0,1]}$ and the invariant measure that corresponds to $\id_{[0,1]}$ by~\cref{mu-phi} is the one-dimensional Lebesgue measure on $[0,1]$.
\end{example}


\renewcommand{\theequation}{5.\arabic{equation}}\setcounter{equation}{0}
\section{The family \texorpdfstring{$\mathcal P_{S_p}^\textnormal{a}$}{Pₛᵃ}}\label{S5}

It is well-known (see \cite{Renyi1957, Rohlin1961}) that 
\begin{equation}\label{PSpa}
\mathcal P_{S_p}^\textnormal{a}=\{\id_{[0,1]}\},
\end{equation}
however it can be deduced from the following more general result about the transformation~$S$ in~\cite{LasotaMackey1994}, which can be stated as follows.

\begin{theorem}[{see \cite[Theorem 6.2.1]{LasotaMackey1994}; cf.~\cite[Theorem 1]{LasotaYork1973}}]\label{thm:621}
Assume that $f_k\in C^2([a_k,a_{k+1}))$ for every $k\in\{1,\ldots,p\}$ and there exist $\alpha\in(1,\infty)$ and $\beta\in\mathbb R$ such that $\alpha\leq f_k'(x)$ and $-f_k''(x)\leq\beta [f_k'(x)]^2$ for all $k\in\{1,\ldots,p\}$ and $x\in[a_k,a_{k+1})$, then $\mathcal P_S^\textnormal{a}$ consists of exactly one function.
\end{theorem}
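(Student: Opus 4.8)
The statement is a restatement, in the functional-equation language of \cref{S3}, of the classical Lasota--Yorke theorem, so the plan is to match the two sets of hypotheses and then transport the conclusion ``exactly one invariant density'' into ``$\FPS$ consists of exactly one function'' through the correspondence \cref{mu-phi}. The bridge is the Frobenius--Perron (transfer) operator $P_S$ of $S$: I would connect it to the operator $\T_S$ of \cref{S4} by passing from a distribution function $\phi$ to its density, since $\T_S$ acts on $\FP$ while the cited result is phrased for densities in $L^1$.

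First I would differentiate \cref{FE}. If $\phi\in\FPS$ is absolutely continuous with density $g=\phi'$, then \cref{FE} becomes the fixed-point equation $g=P_Sg$, where $P_Sg(x)=\sum_{k=1}^{p}g(f_k^{-1}(x))\,(f_k^{-1})'(x)$; the hypotheses $f_k\in C^2$ and $f_k'\ge\alpha$ ensure that each weight $(f_k^{-1})'$ is $C^1$ and bounded above by $1/\alpha$. This recasts membership in $\FPS$ as being a stationary density of $P_S$.

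The analytic core, and the step I expect to carry the real weight, is the Lasota--Yorke inequality. Using the distortion bound $-f_k''\le\beta[f_k']^2$ to control the variation of the weights $(f_k^{-1})'$ together with the uniform expansion $f_k'\ge\alpha>1$, I would derive an estimate of the form $\mathrm{Var}(P_Sg)\le\frac{2}{\alpha}\mathrm{Var}(g)+C\norm{g}_1$ with $C=C(\alpha,\beta)$. Since $\frac{2}{\alpha}<1$ holds after passing to a high enough iterate, a suitable ball of bounded-variation densities becomes forward invariant; applying Helly's selection theorem to the Ces\`aro averages $\frac1n\sum_{j=0}^{n-1}P_S^jg_0$ and extracting an $L^1$-limit then produces a stationary density of $P_S$, establishing existence.

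Finally comes uniqueness, which I regard as the main obstacle. Here the full-branch structure is decisive: each $f_k$ maps $[a_k,a_{k+1}]$ bijectively onto all of $[0,1]$, so $S$ is topologically exact and $P_S$ has, on the space of bounded-variation densities, a single normalized fixed point. Transporting this unique density back through \cref{mu-phi}, and invoking the cited theorem to exclude any invariant probability measure lying outside this class, shows that $\FPS$ consists of exactly one function. In the $p$-adic case the weights are constant, $(f_k^{-1})'\equiv\frac1p$, which forces $g\equiv1$ and hence $\phi=\id_{[0,1]}$, thereby recovering \cref{PSpa}.
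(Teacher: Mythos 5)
The first thing to note is that the paper contains no proof of this statement: \cref{thm:621} is imported verbatim from \cite[Theorem 6.2.1]{LasotaMackey1994}, so there is no internal argument to compare yours against. Your reconstruction --- differentiate \cref{FE} to identify absolutely continuous solutions with fixed densities $g=P_Sg$ of the Frobenius--Perron operator, prove a Lasota--Yorke inequality $\mathrm{Var}(P_Sg)\leq\frac{2}{\alpha}\mathrm{Var}(g)+C\norm{g}_1$ (passing to an iterate when $\alpha\leq 2$), obtain existence via Helly's theorem applied to Ces\`{a}ro averages, and uniqueness from the full-branch structure --- is precisely the classical route of the cited source, and as a proof that $\FPS^a$ is a singleton it is sound in outline. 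One detail worth making explicit: the hypothesis $-f_k''\leq\beta[f_k']^2$ is one-sided, so it bounds only the upper derivative of the weights $(f_k^{-1})'$, not their absolute derivative; but since these weights also satisfy $0<(f_k^{-1})'\leq 1/\alpha$, a one-sided derivative bound together with positivity and boundedness does control their total variation, so your distortion step goes through.

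The genuine gap is in your final paragraph. You propose to conclude that all of $\FPS$ is a singleton by ``invoking the cited theorem to exclude any invariant probability measure lying outside this class.'' This is circular --- the cited theorem \emph{is} the statement being proved --- and moreover no argument can close the gap, because the conclusion as literally printed is false. Under the theorem's hypotheses we have $a_1=0$ and $f_1(0)=0$, so $S(0)=0$ and, by \cref{rem:FixedPoints}, $\delta_0$ is invariant; its distribution function $\chi_{(0,1]}$ belongs to $\FPS$ alongside the absolutely continuous solution, so $\FPS$ always contains at least two elements here, and for $S_p$ the constructions of \cref{S6} produce infinitely many. The statement is evidently a misprint for $\FPS^a$, which is exactly how the surrounding text uses it (it is invoked to justify \cref{PSpa}, i.e.\ $\mathcal P_{S_p}^a=\{\id_{[0,1]}\}$): the bounded-variation/transfer-operator machinery yields exactly one absolutely continuous invariant distribution function and is silent about singular and atomic ones. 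Your argument correctly proves the corrected statement; the step claiming to exclude non-absolutely-continuous invariant measures should simply be deleted rather than repaired.
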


Since $\id_{[0,1]}$ is the unique absolutely continuous function belonging to the family $\mathcal P_{S_p}$, in view of \Cref{thm:BB}, it would be interesting (see \Cref{ex:x^2}) to find the biggest (in the sense of inclusion) subfamily of the family $\mathcal I$ such that for any function $\phi$ from this subfamily we have $\BB_{S_p}^\phi=\id_{[0,1]}$. We do not know what this subfamily is, but we have the following result.

\begin{theorem}\label{thm:PSpa=id} 
If $\phi\in\mathcal I$ is absolutely continuous, then $\lim_{m\to\infty}\T_{S_p}^m\phi(x)=x$ for every $x\in[0,1]$. In particular, $\BB_{S_p}^\phi=\B_{S_p}^\phi=\id_{[0,1]}$.
\end{theorem}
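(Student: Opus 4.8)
The plan is to exploit absolute continuity to turn the sum defining $\T_{S_p}^m\phi$ into an integral against the density, and then to recognise the resulting averages as Riemann sums. Since $\phi\in\FI$ is absolutely continuous, write $\phi(x)=\int_0^x g(t)\,dt$ with $g=\phi'\in L^1([0,1])$, $g\ge 0$ a.e., and $\int_0^1 g=\phi(1)-\phi(0)=1$. Substituting $t=\frac{s+k}{p^m}$ in each summand of~\cref{Tpm} turns $\phi(\frac{x+k}{p^m})-\phi(\frac{k}{p^m})$ into $\frac{1}{p^m}\int_0^x g(\frac{s+k}{p^m})\,ds$, whence
\begin{equation*}
\T_{S_p}^m\phi(x)=\int_0^x g_m(s)\,ds,\qquad g_m(s):=\frac{1}{p^m}\sum_{k=0}^{p^m-1}g\Big(\frac{s+k}{p^m}\Big).
\end{equation*}
Thus it suffices to prove $\int_0^x g_m\to x=\int_0^x 1$; in fact I will establish this uniformly in $x\in[0,1]$.

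First I would treat a continuous density. If $g$ is continuous on $[0,1]$, then for fixed $s$ the tag point $\frac{s+k}{p^m}$ lies in $[\frac{k}{p^m},\frac{k+1}{p^m}]$, so $g_m(s)$ is a Riemann sum for $\int_0^1 g=1$ with mesh $\frac{1}{p^m}$. Bounding the error by the modulus of continuity $\omega_g$ gives $\sup_{s\in[0,1]}\abs{g_m(s)-1}\le\omega_g(1/p^m)\to 0$, so $g_m\to 1$ uniformly and hence $\int_0^x g_m\to x$ uniformly in $x$.

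For a general $g\in L^1$ I would combine the continuous case with the fact that the averaging map $g\mapsto g_m$ is an $L^1$-contraction preserving the integral. Indeed $g\mapsto\frac{1}{p}\sum_{k=0}^{p-1}g(\frac{\cdot+k}{p})$ is the Perron--Frobenius operator of $S_p$: it is positive and satisfies $\int_0^1 g_m=\int_0^1 g$ (a direct change of variables), so $\abs{g_m-h_m}\le(\abs{g-h})_m$ yields $\norm{g_m-h_m}_{L^1}\le\norm{g-h}_{L^1}$ for any two densities. Given $\varepsilon>0$, choose continuous $h\ge 0$ with $\int_0^1 h=1$ and $\norm{g-h}_{L^1}<\varepsilon$; then
\begin{equation*}
\Big|\int_0^x g_m-x\Big|\le\Big|\int_0^x(g_m-h_m)\Big|+\Big|\int_0^x h_m-x\Big|\le\norm{g-h}_{L^1}+\Big|\int_0^x h_m-x\Big|,
\end{equation*}
and the last term tends to $0$ by the continuous case; letting $m\to\infty$ and then $\varepsilon\to 0$ gives $\lim_m\T_{S_p}^m\phi(x)=x$ for every $x\in[0,1]$. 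I expect this density step to be the only genuine obstacle: the pointwise Riemann-sum argument is valid verbatim only for continuous (or Riemann-integrable) $g$, and the passage to arbitrary absolutely continuous $\phi$ really requires the contraction and integral-preservation properties of the averaging operator.

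Finally, the ``in particular'' claim is immediate. Since $(\T_{S_p}^m\phi(x))_{m\in\mathbb N}$ converges to $x$ and $\B$ agrees with the ordinary limit on convergent sequences, we get $\B_{S_p}^\phi(x)=x$ for all $x$, i.e.\ $\B_{S_p}^\phi=\id_{[0,1]}$. As this function is continuous with $\lim_{x\to 1-}\B_{S_p}^\phi(x)=1\in(0,1]$, the normalising constant is $\alpha=1$, so $\BB_{S_p}^\phi(x)=\lim_{z\to x+}\B_{S_p}^\phi(z)=x$ on $[0,1)$ and $\BB_{S_p}^\phi(1)=1$, whence $\BB_{S_p}^\phi=\id_{[0,1]}$.
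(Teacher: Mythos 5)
Your proof is correct, and although it shares the paper's skeleton---reducing $\lim_{m\to\infty}\T_{S_p}^m\phi(x)=x$ to the $L^1$-convergence of the averaged density $g_m=\frac{1}{p^m}\sum_{k=0}^{p^m-1}g\bigl(\frac{\cdot+k}{p^m}\bigr)$ to the constant $1$---it carries out both halves of that reduction differently. The paper extends $\phi|_{[0,1)}$ $1$-periodically and runs a chain of estimates exploiting periodicity to dominate $\abs{\T_{S_p}^m\phi(x)-x}$ by the distance $\norm{R_m(\phi')-\int_0^1\phi'(s)\,ds}_1$, and then it quotes the $L^1$-convergence of Riemann sums of integrable functions (\cref{RiemannSum}, cited from the literature). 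You instead observe, by a change of variables in each summand of \cref{Tpm}, the clean identity $\T_{S_p}^m\phi(x)=\int_0^x g_m(s)\,ds$, which makes the periodization detour unnecessary (your contraction bound $\abs{\int_0^x(g_m-h_m)}\le\norm{g-h}_{L^1}$ does in one line what the paper's periodicity computation does), and you prove the needed convergence from scratch: uniform convergence $g_m\to\int_0^1 g$ for continuous $g$ via the modulus of continuity, then density of continuous functions in $L^1$ combined with the fact that $g\mapsto g_m$ is a positive, integral-preserving $L^1$-contraction, which you correctly identify as the $m$-th iterate of the Perron--Frobenius operator of $S_p$. What your route buys is a self-contained argument that exposes the theorem as exactly the statement that transfer-operator iterates of any density converge in $L^1$ to the invariant density $1$ (and your approximation step in fact yields the Riemann-sum convergence along every $n$, not only $n=p^m$); what the paper's route buys is brevity by citation. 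All your details check out: $g=\phi'\ge 0$ a.e.\ with $\int_0^1 g=1$ because $\phi\in\FI$ is non-decreasing with $\phi(1)-\phi(0)=1$; the approximant $h$ can indeed be taken continuous, non-negative and normalized; and the concluding Banach-limit step, including the determination $\alpha=1$ in the definition of $\BB_{S_p}^\phi$, is the same as the paper's.
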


\begin{proof}
Fix an absolutely continuous function $\phi\in\FI$ and let $\Phi\colon\mathbb R\to [0,1]$ be the $1$-periodic extension of $\phi|_{[0,1)}$. According to \cite[Theorem 7.4.4]{Lojasiewicz1988} for every $m\in\mathbb N$ the function 
\begin{equation*}
R_m(\phi')=\frac{1}{p^m}\sum_{k=0}^{p^m-1}\Phi'\left(\cdot+\frac{k}{p^m}\right)
\end{equation*} 
belongs to $L^1([0,1])$, and moreover, 
\begin{equation}\label{RiemannSum}
\lim_{m\to\infty}\left\lVert R_m(\phi')-\int_{0}^{1}\phi'(s)ds\right\rVert_1=0;
\end{equation}
see e.g.~\cite[Section 2]{RuchWeber2006}.

Fix $x\in[0,1]$ and $m\in\mathbb N$. 
Applying again~\cite[Theorem 7.4.4]{Lojasiewicz1988}, \cref{Tpm}, and the periodicity of $\Phi$, we obtain 
\begin{align*}
|\T_{S_p}^m\phi(x)-x|&=\left|\int_{0}^x \left[(\T_{S_p}^m\phi)'(t)-1\right]dt\right|
\leq\int_{0}^{x}|(\T_{S_p}^m\phi)'(t)-1|\,dt\\
&=\int_{0}^{x}\left|\frac{1}{p^m}\sum_{k=0}^{p^m-1}\phi'\left(\frac{t+k}{p^m}\right)
-\int_{0}^{1}\phi'(s)\,ds\right|dt\\
&\leq\int_{0}^{1}\left|\frac{1}{p^m}\sum_{k=0}^{p^m-1}\Phi'\left(\frac{t+k}{p^m}\right)
-\int_{0}^{1}\phi'(s)\,ds\right|dt\\
&=\sum_{i=0}^{p^m-1}\frac{1}{p^m}\int_{0}^{1}\left|\frac{1}{p^m}
\sum_{k=0}^{p^m-1}\Phi'\left(\frac{t+i+k}{p^m}\right)-\int_{0}^{1}\phi'(s)\,ds\right|dt\\
&=\sum_{i=0}^{p^m-1}\int_{\frac{i}{p^m}}^{\frac{i+1}{p^m}}\left|\frac{1}{p^m}
\sum_{k=0}^{p^m-1}\Phi'\left(z+\frac{k}{p^m}\right)-\int_{0}^{1}\phi'(s)\,ds\right|dz\\
&=\int_{0}^{1}\left|\frac{1}{p^m}\sum_{k=0}^{p^m-1}\phi'\left(z+\frac{k}{p^m}\right)
-\int_{0}^{1}\phi'(s)\,ds\right|dz\\
&=\lVert R_m(\phi')-\int_{0}^{1}\phi'(s)ds\rVert_1.
\end{align*}
This jointly with \cref{RiemannSum} completes the proof.
\end{proof}


\renewcommand{\theequation}{6.\arabic{equation}}\setcounter{equation}{0}
\section{The family \texorpdfstring{$\mathcal{P}_{S_p}^\textnormal{j}$}{Pₛʲ}}\label{S6}

For $\phi\in\FI$ we set $\phi(x+)=\lim_{y\to x+}\phi(y)$. The next remark can be easily verified with the use of \cref{rem:Continuity}.

\begin{remark}\label{rem:lim+}
Assume that $\phi\in\FPS$ and $\phi(0+)=0$.
Then the function $\psi\colon[0,1]\to[0,1]$ defined by $\psi(x)=\phi(x+)-\phi(x)$ satisfies~\cref{FESp} and is such that $\psi(x)=0$ if and only if $\phi$ is continuous at $x$.
\end{remark}

Note that if $\phi\in \mathcal{P}_{S_p}$ fails to be continuous at a point~$x$, then there are further points~$\frac{x+k}{p}$, where it fails to be continuous as well, and the jumps have to add up.
But now, all of these points generate themselves further points of discontinuity.
However, this process cannot continue forever as the sum of all possible jumps is bounded from above by~$1$.
Therefore, points have to start to coincide, forcing them to be of a certain form.
This is the motivation behind the definition of the following set:
\begin{equation*}
D:=\bigcup_{m\in\mathbb N}\left\{\frac{i}{p^m-1}\mid i\in\{0,\ldots,p^m-2\}\right\}.
\end{equation*}

\begin{prop}\label{prop:ContinuouityPoints}
Every $\phi\in\FPSp$ is continuous at each point outside the set $D$.
\end{prop}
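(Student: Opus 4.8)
The plan is to translate the statement into the language of the invariant measure~$\mu$ and then run a purely dynamical argument on the forward orbit of an atom. Since $\phi$ is left-continuous with $\phi(x)=\mu(\clopen{0}{x})$, the function $\phi$ is continuous at $x\in\clopen{0}{1}$ exactly when $\mu(\{x\})=0$, and it is automatically continuous at~$1$; moreover, $D$ is precisely the set of periodic points of $S_p$, because $S_p^m(x)=x$ forces $(p^m-1)x\in\mathbb Z$. Hence it suffices to prove the contrapositive: every atom of~$\mu$, i.e.\ every $x$ with $\mu(\{x\})>0$, is a periodic point of~$S_p$.

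First I would record two consequences of invariance. For $x\in\clopen{0}{1}$ the preimage $S_p^{-1}(\{x\})$ consists of the $p$ distinct points $\frac{x+k}{p}$, $k=0,\ldots,p-1$, so $\mu(\{x\})=\sum_{k=0}^{p-1}\mu(\{\frac{x+k}{p}\})$; in particular, taking $y\in S_p^{-1}(\{x\})$ gives $\mu(\{S_p(y)\})\geq\mu(\{y\})$. Thus the weights $\mu(\{S_p^j x\})$ are non-decreasing in~$j$, so every point of the forward orbit of an atom~$x$ (of weight $w:=\mu(\{x\})>0$) has weight at least~$w$. Since at most $\lfloor 1/w\rfloor$ points can carry mass $\geq w$, the forward orbit is finite, whence $x$ is eventually periodic: there are a smallest $n\geq 0$ and an $\ell\geq 1$ with $S_p^{n+\ell}x=S_p^n x$.

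Next I would analyse the cycle $C=\{S_p^n x,\ldots,S_p^{n+\ell-1}x\}$. Running once around~$C$, the non-decreasing weight inequality forces all cycle points to share one common weight $W\geq w$. Feeding this into the preimage identity at a cycle point~$c$: its cycle-predecessor $c'\in C$ (the unique cycle point with $S_p(c')=c$) is one preimage with $\mu(\{c'\})=W=\mu(\{c\})$, and since all summands are non-negative, every preimage of~$c$ other than~$c'$ must have weight~$0$. This is the crux of the argument.

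Finally I would derive a contradiction assuming $n\geq 1$. The point $S_p^{n-1}x$ maps into the cycle at $c:=S_p^n x$, so it is a preimage of~$c$; by minimality of~$n$ it is not periodic, hence distinct from the cycle-predecessor~$c'$, so by the previous step $\mu(\{S_p^{n-1}x\})=0$. But $S_p^{n-1}x$ lies in the forward orbit of~$x$ and therefore has weight $\geq w>0$, a contradiction. Hence $n=0$, i.e.\ $x$ is periodic and $x\in D$. I expect the main obstacle to be exactly this last upgrade: the easy weight bookkeeping yields only eventual periodicity, whereas $D$ consists of genuinely periodic points, so the real work is showing that invariance starves any transient pre-periodic tail of mass (the vanishing of the weights of non-cycle preimages), thereby excluding atoms at eventually-periodic-but-not-periodic points such as~$\frac1p$.
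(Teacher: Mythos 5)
Your proof is correct, and it takes a genuinely different route from the paper's. The paper argues directly, staying in the language of $\phi$: fixing $z_0\notin D$ with jump $\alpha=\phi(z_0+)-\phi(z_0)$, it uses the fact that the jump function $\psi(x)=\phi(x+)-\phi(x)$ satisfies the same functional equation (\cref{rem:lim+}, after a normalization arranging $\phi(0+)=0$) to propagate the jump \emph{backwards}: each full preimage level $C_m=\{\frac{z_0+i}{p^{m-1}}\,|\,i\in\{0,\ldots,p^{m-1}-1\}\}$ carries total jump exactly $\alpha$, and since $z_0$ is not periodic these levels are pairwise disjoint, so $\alpha M\leq 1$ for all $M$ forces $\alpha=0$. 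You instead prove the contrapositive in measure language, pushing mass \emph{forwards}: the atom identity $\mu(\{x\})=\sum_{k=0}^{p-1}\mu\left(\left\{\frac{x+k}{p}\right\}\right)$ makes atom masses non-decreasing along forward orbits, so an atom's orbit is finite and eventually periodic; constancy of masses around the cycle then forces every off-cycle preimage of a cycle point to be null, which starves the transient tail and upgrades eventual periodicity to genuine periodicity, i.e.\ membership in $D$ (your identification of $D$ with the periodic points of $S_p$ is correct, as is the reduction ``$\phi$ continuous at $x$ iff $\mu(\{x\})=0$''). Both arguments rest on the same conservation law --- your preimage identity is exactly the measure-side reading of \cref{rem:lim+} --- but where the paper uses $z_0\notin D$ to get disjointness of the backward levels, you need the tail-starvation step to exclude atoms at merely pre-periodic points such as $\frac{1}{p}$, which you correctly flag as the crux. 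As for what each buys: the paper's route is shorter and one-shot, though it must handle the normalization $\phi(0+)=0$ (the possible atom at the fixed point $0$ obstructs the jump-function remark), a fuss your measure formulation sidesteps entirely since invariance gives the atom identity unconditionally; your route yields extra structural information --- atoms of invariant measures live on cycles and carry equal mass along each cycle --- which the paper only establishes later, in \cref{S6}, where it underpins \cref{lem:jumpFunction2} and \cref{thm:jamp}.
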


\begin{proof}
Fix $\phi\in\FPSp$ and $z_0\in [0,1]\setminus D$. 

If $\phi(0+)=1$, then $\phi$ is constant on $(0,1]$, and hence continuous at~$z_0$. Thus we assume that $\phi(0+)\in[0,1)$. We also assume that $\phi(0+)=0$, otherwise we replace $\phi$ with $\Phi\colon[0,1]\to[0,1]$ given by
\begin{equation*}
\Phi(x)=\begin{cases*}
0,&if $x=0$,\\
\frac{\phi(x)-\phi(0+)}{1-\phi(0+)},&if $x\in(0,1]$.
\end{cases*}
\end{equation*}
Here, we gloss over a small technical issue, namely that constant functions differing from~$0$ are not contained in~$\FPSp$.

Put 
\begin{equation*}
\alpha=\phi(z_0+)-\phi(z_0)
\end{equation*} 
and note that $\alpha\geq 0$ as $\phi$ is non-decreasing. For every $m\in\mathbb N$ we set
\begin{equation*}
C_m=\left\{\frac{z_0+i}{p^{m-1}}\mid i\in\{0,\ldots,p^{m-1}-1\}\right\}.
\end{equation*}
We want to show that
\begin{equation}\label{Cm}
\sum_{x\in C_m}[\phi(x+)-\phi(x)]=\alpha
\end{equation}
for every $m\in\mathbb N$. For $m=1$, this is clear by the definition of the number $\alpha$. Fix $m\in\mathbb N$ and assume that \cref{Cm} holds. By \cref{rem:lim+}, we have
\begin{equation*}
\alpha=\sum_{x\in C_m}\sum_{k=0}^{p-1}\left[\phi\left(\frac{x+k}{p}
+\right)-\phi\left(\frac{x+k}{p}\right)\right]
=\sum_{x\in C_{m+1}}[\phi(x+)-\phi(x)].
\end{equation*}
	
Since $z_0\notin D$, we have that $C_n\cap C_m=\emptyset$ for all $n,m\in\mathbb N$ with $n\neq m$. This jointly with \cref{Cm} and the fact that $\phi$ is non-decreasing gives
\begin{equation*}
\alpha M=\sum_{m=1}^M\sum_{x\in C_m}[\phi(x+)-\phi(x)]\leq \sum_{x\in[0,1]}[\phi(x+)-\phi(x)]\leq 1
\end{equation*}
for every $M\in\mathbb N$. Thus $\alpha=0$, and by \Cref{rem:lim+} we see that $\phi$ is continuous at $z_0$.
\end{proof}

In principle, if $f$ is not continuous at a point $x$, then there is at least one~$k$ such that $f$ also fails to be continuous at the point~$\frac{x+k}{p}$.
It turns out that there is exactly one such~$k$.
\begin{lemma}\label{lem:D}
Let $x\in [0,1]$. Then $\card\left(D\cap\{\frac{x+k}{p}\mid k\in\{0,\ldots,p-1\}\}\right)\leq 1$ and equality holds if and only if $x\in D$.
\end{lemma}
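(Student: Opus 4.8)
My plan is to reinterpret the set $D$ dynamically before touching the preimages. For $x\in[0,1)$ and $m\in\mathbb N$ one has $S_p^m(x)=x$ exactly when $p^mx-x\in\mathbb Z$, i.e.\ when $x=\frac{i}{p^m-1}$ for some $i\in\{0,\ldots,p^m-2\}$; hence $D$ is precisely the set of periodic points of $S_p$ lying in $[0,1)$. Two facts I would record at the outset and use throughout: first, since $b\mid p^m-1$ for some $m$ if and only if $\gcd(b,p)=1$, a reduced fraction $\frac{a}{b}\in[0,1)$ belongs to $D$ if and only if $\gcd(b,p)=1$; second, for $x\in[0,1)$ the $p$ numbers $y_k:=\frac{x+k}{p}$, $k\in\{0,\ldots,p-1\}$, are exactly the points of $[0,1)$ that $S_p$ maps to $x$. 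The value $x=1$ is degenerate and I would dispose of it in one line: the listed points are then $\frac1p,\ldots,\frac{p-1}{p},1$, whose reduced denominators all share a factor with $p$, while $1\notin D$; so the intersection is empty and $x=1\notin D$, consistently with the claim.

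Next I would establish the bound $\card\leq 1$ by a short arithmetic argument valid for every $x$. Suppose $y_i,y_j\in D$ with $i\neq j$. Their difference is $y_i-y_j=\frac{i-j}{p}$. On one hand, as a difference of two elements of $D$ it has reduced denominator coprime to $p$. On the other hand, since $1\leq|i-j|\leq p-1$, the reduced denominator of $\frac{i-j}{p}$ equals $p/\gcd(|i-j|,p)$, which is strictly larger than $1$ and divides $p$, so it shares a prime factor with $p$. This contradiction forces $i=j$, giving $\card\leq 1$.

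For the equivalence I would argue both implications through the periodicity description. If $x\in D$, pick $m$ with $S_p^m(x)=x$ and set $y=S_p^{m-1}(x)$; then $S_p(y)=x$ and $S_p^m(y)=y$, so $y$ is a preimage of $x$ lying in $D$, whence $\card\geq 1$, and together with the previous bound $\card=1$. Conversely, if some $y_k\in D$, pick $n$ with $S_p^n(y_k)=y_k$; then $S_p^n(x)=S_p^n(S_p(y_k))=S_p(S_p^n(y_k))=S_p(y_k)=x$, so $x$ is periodic and $x\in D$.

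I expect the only genuine subtlety to be the $\card\leq 1$ step, i.e.\ the injectivity of $S_p$ on its periodic points. The difference computation above is the cleanest route, and crucially it makes no primality assumption on $p$; a purely dynamical alternative—observing that the finite orbit of the periodic point $x$ is cyclically permuted by $S_p$, so only one orbit point maps to $x$—works equally well but needs a little more bookkeeping about orbits. Everything else reduces to translating between the three descriptions of $D$ (as $\bigcup_m\{i/(p^m-1)\}$, as periodic points, and as reduced fractions with denominator coprime to $p$), which is routine once the equivalence $\gcd(b,p)=1\iff b\mid p^m-1$ for some $m$ is in place.
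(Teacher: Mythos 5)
Your proof is correct, and every step checks out, including the degenerate case $x=1$ (empty intersection, consistent with $1\notin D$) and the small verifications your dynamical framing requires, namely that $S_p^m(x)=p^mx \pmod 1$ on $[0,1)$, so that $D$ is exactly the set of periodic points of $S_p$, and that a reduced fraction lies in $D$ if and only if its denominator is coprime to $p$. Where it sits relative to the paper: for the bound $\card\leq 1$ the two arguments are essentially the same arithmetic --- the paper writes the two alleged elements as $\frac{i_1}{p^{m_1}-1}$ and $\frac{i_2}{p^{m_2}-1}$, clears denominators, and invokes $\gcd(p,p^{m}-1)=1$, which is precisely your observation that a difference of two elements of $D$ has reduced denominator coprime to $p$ while $\frac{i-j}{p}$ with $1\leq|i-j|\leq p-1$ does not. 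For the equivalence, however, your route is genuinely different: the paper computes explicitly, verifying from $x=\frac{k+lp}{p^m-1}$ that $\frac{x+k}{p}=\frac{l+kp^{m-1}}{p^m-1}\in D$ and dismissing the converse as an easy calculation, whereas you obtain both directions dynamically --- the orbit predecessor $y=S_p^{m-1}(x)$ is a periodic preimage of the periodic point $x$, and conversely the image of a periodic preimage is periodic. The paper's version buys a self-contained modular computation with no preliminary lemmas; yours buys a conceptual explanation (the unique element of $D$ among the $p$ preimages is the predecessor of $x$ on its cycle) that the paper only makes explicit afterwards, in \cref{lem:Dml} and the subsequent decomposition of $D$ into the cycles $D^m_l$, so your reading effectively anticipates the structure the authors develop next.
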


\begin{proof}
Fix $x\in[0,1]$ and assume by contradiction that there exist $k,l\in\{0,\ldots,p-1\}$ such that $\frac{x+k}{p},\frac{x+l}{p}\in D$, i.e.\ there are $m_1,m_2\in\mathbb N$, $i_1\in\{0,\ldots,p^{m_1}-2\}$, and $i_2\in\{0,\ldots,p^{m_2}-2\}$ such that $\frac{x+k}{p}=\frac{i_1}{p^{m_1}-1}$ and
$\frac{x+l}{p}=\frac{i_2}{p^{m_2}-1}$.
Then
\begin{equation*}
\frac{k-l}{p}=\frac{i_1(p^{m_2}-1)-i_2(p^{m_1}-1)}{(p^{m_1}-1)(p^{m_2}-1)}.
\end{equation*}
Since $\gcd(p,p^{m_1}-1)=\gcd(p,p^{m_2}-1)=1$, we conclude that $k=l$.

To prove the second part of the lemma assume first that $x\in D$. Then there exist $m\in\mathbb N$, $k\in\{0,\ldots,p-1\}$, and $l\in\{0,\ldots,p^{m-1}-1\}$ such that $x=\frac{k+lp}{p^m-1}$; note that $1\not\in D$ yields $k\neq p-1$ or $l\neq p^{m-1}-1$. Then
\begin{equation*}
\frac{x+k}{p}=\frac{k+lp +k(p^m-1)}{p(p^m-1)}=\frac{l+kp^{m-1}}{p^m-1}\in D.
\end{equation*}
Conversely, if there exists $k\in\{0,\ldots,p-1\}$ such that $\frac{x+k}{p}\in D$, then an easy calculation shows that $x\in D$.
\end{proof}

The next observation is an obvious consequence of the definition of $D$, the fact that $1\notin D$, and \cref{lem:D}.
Thus we omit the proof.

\begin{lemma}\label{lem:Dml}
If $x_0\in D$, then there exist $m\in\mathbb N$ and $(k_0,\ldots,k_{m-1})\in\{0,\ldots,p-1\}^m$ such that $k_0+k_1p+\cdots+k_{m-1}p^{m-1}\neq p^m-1$ and
\begin{equation}\label{formulaDml}
\frac{x_0+k_0+k_1p+\cdots+k_{m-1}p^{m-1}}{p^{m}}=x_0;
\end{equation}
moreover, the formulas
\begin{equation}\label{cycle}
x_{(n+1)(\hspace{-1.7ex}\mod m)}=\frac{x_n+k_n}{p}\quad\text{for every }n\in\{0,\ldots,m-1\}
\end{equation}
define the cycle $(x_0,\ldots,x_{m-1})$ with
\begin{equation*}\label{formulaxn}
x_n=\frac{\sum_{j=0}^{m-1}k_jp^{j+m-n}(\hspace{-2ex}\mod p^{m}-1)}{p^m-1}\in D\quad\text{for every } n\in\{1,\ldots,m-1\}.
\end{equation*}
\end{lemma}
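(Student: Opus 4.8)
The plan is to extract all the data directly from the definition of $D$, to read off the cycle from the action of $S_p$ on a suitable finite set, and then to confirm the explicit formula for the points $x_n$ by a single reduction modulo $p^m-1$; \cref{lem:D} enters only to guarantee that taking preimages is unambiguous.

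First I would use the definition of $D$ to fix $m\in\mathbb N$ and $i\in\{0,\ldots,p^m-2\}$ with $x_0=\frac{i}{p^m-1}$, and let $(k_0,\ldots,k_{m-1})\in\{0,\ldots,p-1\}^m$ be the base-$p$ digits of $i$, so that $i=\sum_{j=0}^{m-1}k_jp^j$. Since $1\notin D$ forces $i\le p^m-2$, not every digit can equal $p-1$, which is precisely the requirement $k_0+k_1p+\cdots+k_{m-1}p^{m-1}\ne p^m-1$; and \cref{formulaDml} is immediate, being equivalent to $(p^m-1)x_0=i$.

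Next I would record that on the finite set $D_m:=\{\frac{i}{p^m-1}\mid i\in\{0,\ldots,p^m-2\}\}$ the map $S_p$ acts by $\frac{i}{p^m-1}\mapsto\frac{pi\bmod(p^m-1)}{p^m-1}$, i.e.\ by multiplication by $p$ on numerators modulo $p^m-1$. As $\gcd(p,p^m-1)=1$ (already used in the proof of \cref{lem:D}), this is a bijection of $D_m$, so $x_0$ is periodic, and since $p^m\equiv1\pmod{p^m-1}$ its period divides $m$. Taking at each step the unique preimage lying in $D$---which exists and is unique by \cref{lem:D}---thus produces a finite sequence that returns to $x_0$ after $m$ steps, and this is exactly the cycle \cref{cycle}. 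To match the stated formula I would set $x_n=\frac{p^{m-n}i\bmod(p^m-1)}{p^m-1}$, which is the displayed value since $\sum_{j=0}^{m-1}k_jp^{j+m-n}=p^{m-n}i$; each numerator is a residue modulo $p^m-1$, hence lies in $\{0,\ldots,p^m-2\}$, giving $x_n\in D_m\subset D$.

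The computational heart, and the only place requiring care, is the index bookkeeping. Writing $N_n:=p^{m-n}i\bmod(p^m-1)$, the reduction $p^{m-n}i\equiv\sum_{j=0}^{m-1}k_{(j+n)\bmod m}p^j\pmod{p^m-1}$ (using $p^m\equiv1$) shows that $N_n=\sum_{j=0}^{m-1}k_{(j+n)\bmod m}p^j$ exactly, the right-hand side being $<p^m-1$ because not all digits are $p-1$. From this description the recurrence \cref{cycle} is a one-line telescoping: $N_n+k_n(p^m-1)=pN_{(n+1)\bmod m}$, which rearranges to $\frac{x_n+k_n}{p}=x_{(n+1)\bmod m}$ for every $n\in\{0,\ldots,m-1\}$, the case $n=m-1$ being the closing identity $\frac{x_{m-1}+k_{m-1}}{p}=x_0$. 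In particular the digit $k_n$ occurring in \cref{cycle} is forced to be the leading base-$p$ digit of $x_{(n+1)\bmod m}$, which the cyclic-shift description confirms equals $k_{((n+1)-1)\bmod m}=k_n$; keeping these indices straight is the main obstacle, after which the whole statement follows.
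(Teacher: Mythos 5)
Correct, and in the spirit intended by the paper: the paper actually \emph{omits} the proof of this lemma, declaring it an obvious consequence of the definition of $D$, the fact that $1\notin D$, and \cref{lem:D}, and your argument is a complete and accurate filling-in using exactly those ingredients (write $x_0=\frac{i}{p^m-1}$, take $(k_0,\ldots,k_{m-1})$ to be the base-$p$ digits of $i$, note $i\le p^m-2$ rules out all digits equal to $p-1$). Your computational core checks out: $N_n=p^{m-n}i\bmod(p^m-1)$ equals the cyclic digit shift $\sum_{j=0}^{m-1}k_{(j+n)\bmod m}p^j$ exactly because that sum is $<p^m-1$, and the identity $pN_{(n+1)\bmod m}=N_n+k_n(p^m-1)$ is precisely \cref{cycle}, including the closing case $n=m-1$.
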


Assume that there exists $\phi\in\FPSp$ which is discontinuous at $x_0\in D$. By \Cref{lem:D}, there are uniquely determined sequences $(x_n)_{n\in\mathbb N_0}$ (of real numbers from $[0,1]$) and $(k_n)_{n\in\mathbb N_0}$ (of integer numbers from $\{0,\ldots,p-1\}$) such that $x_{n+1}=\frac{x_n+k_n}{p}\in D$ for every $n\in\mathbb N_0$. \Cref{lem:Dml} says that the sequence $(x_n)_{n\in\mathbb N_0}$ is periodic, i.e.\ there exists $m\in\mathbb N$ such that $x_m=x_0$ and $x_{m+k}=x_k$ for all $k\in \field{N}$. Let $m\in\mathbb N$ be the smallest number with that property. Then the set $\{x_0,\ldots,x_{m-1}\}$ has $m$ distinct points and if $\phi\in\FPSp$ is discontinuous at a point of the set $\{x_0,\ldots,x_{m-1}\}$, then it is discontinuous at every point of that set, by the periodicity of $(x_n)_{n\in\mathbb N_0}$. This allows us to decompose the set $D$ into disjoint subsets $D^m_l$, where each of these sets is minimal (in the sense of inclusion) with the following two properties: 
\begin{enumerate}
\item[\textrm{(i)}] $\card(D^m_l)=m$ (the number $m\in\mathbb N$ will be called the \emph{level} of $D^m_l$, whereas the index $l\in\mathbb N_0$ is used to number the sets of level $m$);
\item[\textrm{(ii)}] If $\phi\in\FPSp$ is discontinuous at a point of $D^m_l$, then it is discontinuous at every point of $D^m_l$.
\end{enumerate}
In fact, each of the sets from the decomposition of $D$ is of the form described above. More precisely, fix a sequence $(k_0,\ldots,k_{m-1})\in\{0,\ldots,p-1\}^{m}$ such that $l=k_0+k_1p+\cdots+k_{m-1}p^{m-1}\neq p^m-1$ and choose $x_0\in D$ satisfying \cref{formulaDml}. Then the formula \cref{cycle} gives raise to the definition of
the cycle $(x_0,\ldots,x_{m-1})$ with
\begin{equation*}
x_n=\frac{lp^{m-n}(\hspace{-2ex}\mod p^{m}-1)}{p^m-1}
\quad\text{for every }n\in\{0,\ldots,m-1\},
\end{equation*}
and $\{x_0,\ldots,x_{m-1}\}\subset D$.

If $x_n\neq x_0$ for every $n\in\{0,\ldots,m-1\}$, then the cycle $(x_0,\ldots,x_{m-1})$ leads to the set $D^m_l=\{x_0,\ldots,x_{m-1}\}$ and since $(x_0,\ldots,x_{m-1})$ is a cycle, we have 
\begin{equation*}
D^m_l=D^m_{lp^n(\hspace{-2ex}\mod p^{m}-1)}\quad\text{for every }n\in\{1,\ldots,m-1\}.
\end{equation*}

Assume now that $j=\min\{n\in\{0,\ldots,m-1\}\mid x_n=x_0\}<m$. Then $j=\card(\{x_0,\ldots,x_{m-1}\})$ and $l=lp^{j}(\hspace{-1ex}\mod p^{m}-1)$, which means that $k_{j+n}=k_n$ for every $n\in\{0,\ldots,m-1-j\}$. Since $(x_0,\ldots,x_{m-1})$ is a cycle, we see that $m=jq$ with some $q\in\mathbb N$. Hence
\begin{align*}
l&=k_0+\cdots+k_jp^{j-1}+k_0p^j+\cdots+k_jp^{2j-1}+\cdots+k_0p^{j(q-1)}+\cdots+k_jp^{jq-1}\\
&=(k_0+\cdots+k_jp^{j-1})(1+p^j+\cdots+p^{(q-1)j}),
\end{align*}
and putting $l_0=k_0+\cdots+k_jp^{j-1}$ we obtain
\begin{align*}
\frac{l}{p^m-1}&=\frac{l_0(1+p^j+\cdots+p^{(q-1)j})}{p^{jq}-1}=
\frac{l_0(1+p^j+\cdots+p^{(q-1)j})}{(p^j-1)(1+p^j+\cdots+p^{(q-1)j})}\\
&=\frac{l_0}{p^j-1}.
\end{align*}
This leads to the set $D^j_{l_0}=\{x_0,\ldots,x_{j-1}\}$, and as previously, we have
\begin{equation*}
D^j_{l_0}=D^j_{l_0p^n(\hspace{-2ex}\mod p^{j})}\quad\text{for every }n\in\{1,\ldots,j-1\}.
\end{equation*}

Summarizing, we see that for each $m\in\mathbb N$ the sequence $(k_0,\ldots,k_{m-1})\in\{0,\ldots,p-1\}^{m}$ with $l=k_0+k_1p+\cdots+k_{m-1}p^{m-1}\neq p^m-1$ leads either to the set $D^m_l$ or to the set $D^j_{l_0}$ with $l_0=k_0+k_1p+\cdots+k_{j-1}p^{j-1}$ and $j$ being a divisor of $m$.
In particular, we have
\begin{itemize}
\item[(1)] $p-1$ sets of level $1$ of the form  
\begin{equation*}
D^1_l=\left\{\frac{l}{p-1}\right\},
\end{equation*}
where $l\in\mathbb L_1=\{0,\ldots,p-2\}$ (cf.~\cref{rem:FixedPoints});
\item[(2)] $\frac{p(p-1)}{2}$ sets of level $2$ of the form
\begin{equation*}
D^2_{k_0+k_1p}=D^2_{k_1+k_0p}=\left\{\frac{k_0+k_1p}{p^2-1},\frac{k_1+k_0p}{p^2-1}\right\},
\end{equation*}
where $k_0,k_1\in\{0,\ldots,p-1\}$ and $k_0\neq k_1$;
\item[(3)] $\frac{p(p^2-1)}{3}$ sets of level $3$ of the form 
\begin{align*}
D^3_{k_0+k_1p+k_2p^2}&=D^3_{k_2+k_0p+k_1p^2}=D^3_{k_1+k_2p+k_0p^2}\\
&=\left\{\frac{k_0+k_1p+k_2p^2}{p^3-1},
\frac{k_1+k_2p+k_0p^2}{p^3-1},\frac{k_2+k_0p+k_1p^2}{p^3-1}\right\},
\end{align*}
where $k_0,k_1,k_2\in\{0,\ldots,p-1\}$ and 
$k_0\neq k_1$ or $k_0\neq k_2$ or $k_1\neq k_2$;
\item[($m$)] $\card \mathbb L_m$ sets of level $m\geq 2$ of the form
\begin{align*}
D^m_l&=D^m_{lp(\hspace{-2ex}\mod p^{m}-1)}=\cdots=D^m_{lp^{m-1}(\hspace{-2ex}\mod p^{m}-1)}\\
&=\left\{\frac{l}{p^m-1},\frac{lp\hspace{.5ex}(\hspace{-2ex}\mod p^{m}-1)}{p^m-1},\ldots,\frac{lp^{m-1}(\hspace{-2ex}\mod p^{m}-1)}{p^m-1}\right\},
\end{align*}
where
\begin{equation*}
l\in\mathbb L_m=\left\{\sum_{i=0}^{m-1}k_ip^{i}\mid (k_0,\ldots,k_{m-1})\in\{0,\ldots,p-1\}^m\text{ is acyclic}\right\}.
\end{equation*}
\end{itemize} 

Now, with each of the set $D^m_l$ we want to associate a jump function belonging to $\FPSp$ that is discontinuous exactly at the points of the set $D^m_l$. The first lemma concerns sets of level~$1$, and its proof is a simple verification.

\begin{lemma}\label{lem:jumpFunction1}
For every $l\in\{0,\ldots,p-2\}$ the function $\phi^1_l\colon[0,1]\to[0,1]$ given by
\begin{equation*}
\phi^1_l(x)=\chi_{(\frac{l}{p-1},1]}(x)
\end{equation*}
belongs to $\FPSp^\textnormal{j}$ and is discontinuous only at the point of the set $D^1_l$.
\end{lemma}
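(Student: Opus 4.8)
The plan is to identify the measure $\mu$ corresponding to $\phi^1_l$ via~\cref{mu-phi} and to recognize it as a Dirac mass already known to be invariant, so that membership in $\FPSp$ and the jump property come for free from the earlier results. First I would check that $\phi^1_l\in\FP$: it is non-decreasing, left-continuous (its only jump is at $\frac{l}{p-1}$, where the left limit equals the value $0$), and satisfies $\phi^1_l(0)=0$ together with $\phi^1_l(1)=1$, the latter because $l\le p-2$ forces $\frac{l}{p-1}<1$. Reading off~\cref{mu-phi}, the identity $\mu(\clopen{0}{x})=\chi_{(\frac{l}{p-1},1]}(x)$ holds for all $x\in[0,1]$ exactly when $\mu=\delta_{l/(p-1)}$, since $\delta_{l/(p-1)}(\clopen{0}{x})=1$ precisely when $\frac{l}{p-1}<x$. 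Thus $\phi^1_l$ is the left-continuous distribution function of the Dirac measure at $\frac{l}{p-1}$.

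Next I would invoke the earlier results. Since $l\in\{0,\dots,p-2\}$, the point $\frac{l}{p-1}$ is one of the fixed points of $S_p$ appearing in~\cref{rem:FixedPoints}, so $\delta_{l/(p-1)}\in\FMSp$. By~\cref{prop:FunctionalEquation} this is equivalent to $\phi^1_l$ satisfying~\cref{FESp}; hence $\phi^1_l\in\FPSp$. Because the corresponding measure $\delta_{l/(p-1)}$ is atomic (a single atom of mass $1$), $\phi^1_l$ is by definition a jump function, so in fact $\phi^1_l\in\FPSp^j$. Finally, $\phi^1_l$ is locally constant off $\frac{l}{p-1}$ and jumps there, so its only discontinuity is the single point of $D^1_l=\{\frac{l}{p-1}\}$, which closes the argument.

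An alternative, entirely self-contained route is to verify~\cref{FESp} directly by substituting $\phi=\phi^1_l$. The right-hand side splits according to whether $k<l$, $k=l$, or $k>l$: for $k<l$ the term $\phi^1_l(\frac{x+k}{p})$ vanishes, the term $k=l$ reproduces $\phi^1_l(x)$, and for $k>l$ one gets $1$; the subtracted constants $\phi^1_l(\frac{k}{p})$ contribute $1$ for each $k>l$ and $0$ otherwise, so the two families of $p-1-l$ extra ones cancel and the sum collapses to $\phi^1_l(x)$. I expect the only place demanding care—the genuine, if mild, obstacle—is the bookkeeping of the strict inequalities at the breakpoints (comparing $\frac{k}{p}$ with $\frac{l}{p-1}$ and checking that the jump point $\frac{l}{p-1}$ lands on the $k=l$ branch), which is exactly what makes the measure-theoretic route via~\cref{rem:FixedPoints} and~\cref{prop:FunctionalEquation} cleaner.
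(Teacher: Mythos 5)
Your proposal is correct. Note that the paper offers no written proof of this lemma---it is dismissed with ``its proof is a simple verification'', which presumably means exactly the direct substitution into \cref{FESp} that you sketch as your alternative route, and your bookkeeping there is sound: the $k=l$ term reproduces $\phi^1_l(x)$ precisely because $\frac{l}{p-1}$ is the fixed point of $x\mapsto\frac{x+l}{p}$, the terms with $k<l$ vanish since $\frac{x+k}{p}\leq\frac{l}{p}<\frac{l}{p-1}$, and the $p-1-l$ ones arising for $k>l$ cancel against the subtracted constants $\phi^1_l(\frac{k}{p})$, which equal $1$ exactly when $k>l$. Your primary route, however, is genuinely different and arguably cleaner: identifying $\phi^1_l$ as the left-continuous distribution function of $\delta_{l/(p-1)}$ via \cref{mu-phi}, invoking \cref{rem:FixedPoints} to get $\delta_{l/(p-1)}\in\FMSp$, and translating back through \cref{prop:FunctionalEquation} delegates the entire verification to facts already established; membership in $\FPSp^j$ then comes for free from the paper's definition of a jump function (the corresponding measure is atomic), and the discontinuity set $D^1_l=\{\frac{l}{p-1}\}$ is read off directly. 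What the direct computation buys instead is a template that scales: for the level-$m$ sets of \cref{lem:jumpFunction2} the atoms form a genuine $m$-cycle rather than a fixed point, so no analogue of \cref{rem:FixedPoints} is available and one must run the case analysis the paper carries out there; your measure-theoretic shortcut is special to level $1$, which is exactly why it is the slicker argument here.
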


The next remark is an immediate consequence of the description of the sets $D^m_l$; cf.~\cref{lem:D}.

\begin{remark}\label{rem:yi}
Let $x\in D^m_l$. Then:
\begin{enumerate}[label=(\roman*)]
\item\label{xy} there exist unique  $y\in D^m_l$ and $k\in\{0,\ldots,p-1\}$ such that $x=\frac{y+k}{p}$;
\item\label{zx} there exist unique $z\in D^m_l$ and $q\in\{0,\ldots,p-1\}$ such that $z=\frac{x+q}{p}$.
\end{enumerate}
\end{remark}

\begin{remark}\label{rem:unique}
Let $x\in(0,1)$ and $y\in D^m_l$. Then there exists at most one $k\in\{0,\ldots,p-1\}$ such that $\frac{k}{p}\leq y<\frac{x+k}{p}$.
\end{remark}

\begin{proof}
Assume by contradiction that $\frac{k}{p}\leq y<\frac{x+k}{p}$ and $\frac{q}{p}\leq y<\frac{x+q}{p}$ with $0\leq k<q\leq p-1$, then $\frac{q}{p}\leq y<\frac{x+k}{p}<\frac{1+k}{p}\leq\frac{q}{p}$, a contradiction.
\end{proof}

Before formulating the second lemma, let us recall that $1\notin D$ and since $D^1_0=\{0\}$, we have $0\notin D^m_l$ for any $l\in\mathbb L_m$, whenever $m\geq 2$. 

\begin{lemma}\label{lem:jumpFunction2}	
Assume that $D^m_l=\{y_0,\ldots,y_{m-1}\}$, where $m\geq 2$, $l=\sum_{i=0}^{m-1}k_ip^{i}$ with $(k_0,\ldots,k_{m-1})\in\{0,\ldots,p-1\}^m$, and $0<y_0<\cdots<y_{m-1}<1$. Then the function $\phi^m_l\colon[0,1]\to[0,1]$ given by
\begin{equation*}
\phi^m_l(x)=\frac{1}{m}\sum_{i=0}^{m-1}\chi_{(y_i,1]}(x)
\end{equation*}
belongs to $\FPSp^\textnormal{j}$ and is discontinuous exactly at points of the set $D^m_l$.
\end{lemma}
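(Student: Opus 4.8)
The plan is to recognize $\phi^m_l$ as the left-continuous distribution function (via \cref{mu-phi}) of the atomic measure $\mu=\frac{1}{m}\sum_{i=0}^{m-1}\delta_{y_i}$, i.e.\ the normalized counting measure on the orbit $D^m_l$. Indeed, for $t\in[0,1]$ one has $\phi^m_l(t)=\frac{1}{m}\#\{i\mid y_i<t\}=\mu(\clopen{0}{t})$. The membership $\phi^m_l\in\FP$ is then routine: it is non-decreasing and left-continuous as a positive combination of the left-continuous non-decreasing functions $\chi_{(y_i,1]}$, while $0<y_0$ and $y_{m-1}<1$ give $\phi^m_l(0)=0$ and $\phi^m_l(1)=1$. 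Since $\mu$ is atomic, once we know $\phi^m_l\in\FPSp$ it is automatically a jump function, hence $\phi^m_l\in\FPSp^j$.

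First I would establish the functional equation \cref{FESp}. Conceptually this is immediate: by the cycle structure underlying $D^m_l$ (\cref{rem:yi}), $S_p$ restricts to a bijection of $D^m_l$ onto itself, so the uniform measure $\mu$ on this periodic orbit is $S_p$-invariant and \cref{prop:FunctionalEquation} applies. To keep the argument self-contained I would instead verify \cref{FESp} directly through a counting identity. Using $\phi^m_l(t)=\frac{1}{m}\#\{i\mid y_i<t\}$, the $k$-th summand on the right-hand side of \cref{FESp} equals $\frac{1}{m}\#\{i\mid \frac{k}{p}\le y_i<\frac{x+k}{p}\}$, so for $x\in(0,1)$ the right-hand side is $\frac{1}{m}\sum_{k=0}^{p-1}\#\{i\mid\frac{k}{p}\le y_i<\frac{x+k}{p}\}$.

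The main step is to evaluate this sum. By \cref{rem:unique}, for each fixed $y_i$ at most one index $k$ satisfies $\frac{k}{p}\le y_i<\frac{x+k}{p}$, so the double count reduces to counting each $i$ at most once; moreover, writing $k^\ast=\lfloor py_i\rfloor$ one checks that the only candidate is $k=k^\ast$ and that the condition then reads $py_i-k^\ast<x$, that is, $S_p(y_i)<x$. Hence the right-hand side equals $\frac{1}{m}\#\{i\mid S_p(y_i)<x\}$. Since $S_p$ permutes $\{y_0,\ldots,y_{m-1}\}$, re-indexing gives $\#\{i\mid S_p(y_i)<x\}=\#\{j\mid y_j<x\}=m\,\phi^m_l(x)$, which is exactly the left-hand side. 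The boundary cases $x=0$ and $x=1$ are handled separately, the former being trivial and the latter following by telescoping. I expect this counting step---correctly pinning down the unique admissible $k$ and recognizing the resulting fractional part as $S_p(y_i)$---to be the only delicate point; everything else is bookkeeping.

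Finally, the discontinuity claim is immediate from the definition: the summands $\chi_{(y_i,1]}$ are continuous away from the distinct points $y_i$, and each contributes an upward jump of size $\frac{1}{m}$ at $y_i$ with no cancellation, so $\phi^m_l$ is discontinuous exactly on $\{y_0,\ldots,y_{m-1}\}=D^m_l$.
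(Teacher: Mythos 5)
Your proposal is correct, and it reorganizes the argument in a genuinely different way from the paper. The paper proves \cref{FESp} by a three-case analysis on the position of $x$ (namely $x\in(0,y_0]$, $x\in(y_{j-1},y_j]$, and $x\in(y_{m-1},1)$), in each case matching the points $y_i$ satisfying $\frac{k}{p}\leq y_i<\frac{x+k}{p}$ against points of the cycle via \cref{rem:yi} and \cref{rem:unique}. You instead prove one uniform counting identity: each summand on the right of \cref{FESp} is $\frac{1}{m}\card\{i\mid\frac{k}{p}\leq y_i<\frac{x+k}{p}\}$, the only admissible $k$ for a given $y_i$ is $k=\lfloor py_i\rfloor$ (well-defined in $\{0,\ldots,p-1\}$ since $0<y_i<1$; in fact $py_i\notin\mathbb{Z}$ because $\gcd(p,p^m-1)=1$), and the condition then reads $S_p(y_i)<x$; re-indexing by the fact that $S_p$ permutes $D^m_l$ --- which is exactly the content of \cref{rem:yi}\cref{xy} and \cref{zx} --- collapses the sum to $\phi^m_l(x)$, with $x\in\{0,1\}$ handled trivially and by telescoping, just as in the paper. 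Your opening observation is also legitimate as a one-line proof: $\phi^m_l$ is the distribution function, via \cref{mu-phi}, of $\mu=\frac{1}{m}\sum_{i=0}^{m-1}\delta_{y_i}$, and since $S_p$ restricted to $D^m_l$ is a bijection, $\mu(S_p^{-1}(B))=\frac{1}{m}\card\{i\mid S_p(y_i)\in B\}=\mu(B)$ for every Borel $B$, so $\mu\in\FMSp$ and \cref{prop:FunctionalEquation} gives $\phi^m_l\in\FPSp$; atomicity of $\mu$ then yields $\phi^m_l\in\FPSp^j$, matching the paper's definition of a jump function. What each approach buys: yours eliminates the casework and makes the dynamical content transparent (the lemma is precisely the invariance of the normalized counting measure on a periodic orbit), while the paper's verification stays entirely at the level of the functional equation without passing back and forth through the measure correspondence. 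The discontinuity claim is handled identically in both: each $\chi_{(y_i,1]}$ contributes an upward jump $\frac{1}{m}$ at the distinct points $y_i$ with no cancellation.
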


\begin{proof}
It is clear from the definition of $\phi^m_l$ that it is non-decreasing, left-continuous, and discontinuous only at points of the set $D^m_l$. 
Moreover, $\phi^m_l(0)=0$ and $\phi^m_l(1)=1$. So, to complete the proof it suffices to show that $\phi^m_l$ satisfies \cref{FESp}, which means that
\begin{equation}\label{FEchi}
\phi^m_l(x)=\frac{1}{m}\sum_{i=0}^{m-1}\sum_{k=0}^{p-1}
\left[\chi_{(y_i,1]}\left(\frac{x+k}{p}\right)-\chi_{(y_i,1]}\left(\frac{k}{p}\right)\right]
\end{equation}
for every $x\in[0,1]$.

It is easy to check that \Cref{FESp} holds for $x\in\{0,1\}$. Fix $x\in(0,1)$. We will consider three cases:
\begin{itemize}
\item[(a)] $x\in(0,y_0]$,
\item[(b)] $x\in(y_{j-1},y_j]$ for some $j\in\{1,\ldots,m-1\}$,
\item[(c)] $x\in(y_{m-1},1)$.
\end{itemize}

Case (a). In this case we have $\phi^m_l(x)=0$.
For proving that \cref{FEchi} holds, it suffices to show that for no $i\in\{0,\ldots,m-1\}$ and $k\in \{0,\ldots,p-1\}$ we have
\begin{equation}\label{yi}	
\frac{k}{p}\leq y_i<\frac{x+k}{p};
\end{equation}
indeed, then for all $i\in\{0,\ldots,m-1\}$ and $k\in \{0,\ldots,p-1\}$ we have
\begin{equation*}
\chi_{(y_i,1]}\left(\frac{x+k}{p}\right)=\chi_{(y_i,1]}\left(\frac{k}{p}\right).
\end{equation*}

Let us assume by contradiction that there are $i\in\{0,\ldots,m-1\}$ and $k\in\{0,\ldots,p-1\}$ satisfying \cref{yi}. By \Cref{rem:yi}~\cref{xy} there exists $y_j\in D^m_l$ and $k_0\in \{0,\ldots,p-1\}$  such that $y_i=\frac{y_j+k}{p}$.
Actually, $k$~and~$k_0$ are the same:
\begin{equation*}
\frac{k}{p}\leq \frac{y_j+k_0}{p} <\frac{x+k}{p},               
\end{equation*}
yielding
\begin{equation*}
-1<y_j-x <k-k_0\leq y_j<1,
\end{equation*}
and thus $k=k_0$.
Then
\begin{equation*}
\frac{k}{p}\leq\frac{y_j+k}{p}<\frac{x+k}{p}\leq\frac{y_0+k}{p},
\end{equation*}
and hence $y_j\in(0,y_0)$, a contradiction to the fact that $y_0$ is minimal. 

Case (b). In this case we have $\phi^m_l(x)=\frac{j}{m}$. 
To prove that \cref{FEchi} holds, it is enough to show that there are exactly $j$ distinct points $y_{i_1},\ldots,y_{i_j}\in D^m_l$ such that for each $i\in\{i_1,\ldots,i_j\}$ there exists only one $k\in\{0,\ldots,p-1\}$ satisfying $\cref{yi}$; indeed, then 
\begin{equation*}
\sum_{k=0}^{p-1}\left[\chi_{(y_i,1]}\left(\frac{x+k}{p}\right)
-\chi_{(y_i,1]}\left(\frac{k}{p}\right)\right]=
\begin{cases*}
1,&if $i\in\{i_1,\ldots,i_j\}$,\\
0,& if $i\notin\{i_1,\ldots,i_j\}.$
\end{cases*}
\end{equation*} 

Assume first that $y_i$ satisfies \cref{yi} with some $k\in\{0,\ldots,p-1\}$. Note that $k$ is unique by \Cref{rem:unique}. From \Cref{rem:yi}~\cref{xy} there exists $y_q\in D^m_l$ and $k_0\in \{1,\ldots,p-1\}$ such that $y_i=\frac{y_q+k_0}{p}$.
As
\begin{equation*}
  \frac{k_0}{p}\leq \frac{y_q+k_0}{p}
               =y_i
               <\frac{k_0+1}{p},
\end{equation*}
the uniqueness just mentioned now guarantees that $k=k_0$.

Then, by~\cref{yi}, we have
\begin{equation*}
\frac{k}{p}\leq\frac{y_q+k}{p}<\frac{x+k}{p}\leq\frac{y_j+k}{p},
\end{equation*}
and hence $y_q\in \{y_0,\ldots,y_{j-1}\}$. This shows that there exist at most $j$ distinct points in $D^m_l$ with the required property. Fix now $y_q\in \{y_0,\ldots,y_{j-1}\}$.  By \Cref{rem:yi}~\cref{zx} there are $y_i\in D^m_l$ and $k\in \{0,\ldots,p-1\}$ such that $y_i=\frac{y_q+k}{p}$, and hence
\begin{equation*}
\frac{k}{p}<y_i=\frac{y_q+k}{p}\leq\frac{y_{j-1}+k}{p}<\frac{x+k}{p},
\end{equation*}
which means that \cref{yi} holds. Therefore, we showed that each of the elements in $\{y_0,\ldots,y_{j-1}\}$ leads to some $y_i\in D^m_l$ and $k\in\{0,\ldots,p-1\}$ satisfying~\cref{yi}.
Actually more is true: the $y_i$ are distinct, and in consequence, the required property holds.

Case (c). In this case $\phi_l^m(x)=1$.
Let $y_i\in\{y_0,\ldots,y_{m-1}\}$. From \Cref{rem:yi}~\cref{xy} there are unique $y_q\in D^m_l$ and $k\in \{0,\ldots,p-1\}$ such that $y_i=\frac{y_q+k}{p}$. Then 
\begin{equation*}
\frac{k}{p}<y_i=\frac{y_q+k}{p}\leq\frac{y_{m-1}+k}{p}<\frac{x+k}{p}
\end{equation*} 
and, by \Cref{rem:unique}, we see that for each $i\in \{0,\ldots,m-1\}$ there is exactly one $k\in\{0,\ldots,p-1\}$ satisfying \cref{yi}. Therefore, for every $i\in \{0,\ldots,m-1\}$ we have
\begin{equation*}
\sum_{k=0}^{p-1}\left[\chi_{(y_i,1]}\left(\frac{x+k}{p}\right)
-\chi_{(y_i,1]}\left(\frac{k}{p}\right)\right]=1,
\end{equation*} 
which implies that \cref{FEchi} holds.
\end{proof}

We conclude our investigations with the following result.

\begin{theorem}\label{thm:jamp}
Let $\phi\in\FPSp$. Then $\phi\in\FPSp^\textnormal{j}$ if and only if there exists a set $\{\alpha^m_l\mid m\in\mathbb N, l\in \mathbb L_m\}$ of non-negative real numbers with $\sum_{m\in\mathbb N}\sum_{l\in\mathbb L_m}\alpha^m_l=1$ such that
\begin{equation}\label{jumpSolution}
\phi=\sum_{m\in\mathbb N}\sum_{l\in\mathbb L_m}\alpha^m_l\phi^m_l.
\end{equation}
\end{theorem}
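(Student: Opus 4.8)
The plan is to prove both implications by passing between the distribution function~$\phi$ and its associated atomic data. For the ``if'' direction I would argue directly that the right-hand side of~\cref{jumpSolution} lies in~$\FPSp^j$. Each~$\phi^m_l$ belongs to~$\FPSp^j$ by~\cref{lem:jumpFunction1} and~\cref{lem:jumpFunction2}, so it is non-decreasing, left-continuous, vanishes at~$0$, equals~$1$ at~$1$, and satisfies~\cref{FESp}. Since $\alpha^m_l\ge 0$ and $\sum_{m,l}\alpha^m_l=1$, the series $\sum_{m,l}\alpha^m_l\phi^m_l$ converges uniformly, being dominated by the convergent series $\sum_{m,l}\alpha^m_l$; hence its sum is non-decreasing, left-continuous, and takes the correct endpoint values. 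Linearity of $\T_{S_p}$ together with the fact that the finite sum over~$k$ commutes with the absolutely convergent sum over $(m,l)$ shows that $\sum_{m,l}\alpha^m_l\phi^m_l$ again satisfies~\cref{FESp}. Finally, the measure corresponding to it by~\cref{mu-phi} is $\sum_{m,l}\alpha^m_l\mu^m_l$, where $\mu^m_l$ denotes the atomic measure attached to~$\phi^m_l$; as every~$\mu^m_l$ is concentrated on the countable set~$D$, so is the whole sum, and hence it is atomic. Thus the sum is a jump function in~$\FPSp$.

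For the ``only if'' direction, fix $\phi\in\FPSp^j$ and write $\beta_x=\phi(x+)-\phi(x)$ for the jump of~$\phi$ at~$x$, so that the associated atomic measure is $\mu=\sum_{x}\beta_x\delta_x$ with $\sum_x\beta_x=1$. The first step is to locate the atoms: by~\cref{prop:ContinuouityPoints}, $\phi$ is continuous off~$D$, so $\beta_x=0$ for every $x\notin D$ and all atoms lie in~$D$. The second step is to record the constraint imposed by~\cref{FESp}. Taking right-hand limits in~\cref{FESp} and subtracting~\cref{FESp} itself, the constant terms $\phi(\frac kp)$ cancel and I obtain
\begin{equation*}
\beta_x=\sum_{k=0}^{p-1}\beta_{(x+k)/p}\qquad\text{for every }x\in[0,1);
\end{equation*}
this is the jump version of~\cref{rem:lim+} and requires no hypothesis on $\phi(0+)$.

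The heart of the proof is the third step: showing that~$\beta$ is constant on each cycle~$D^m_l$. Given $x\in D^m_l$, its $p$ preimages under~$S_p$ are the points $(x+k)/p$, and by~\cref{lem:D} exactly one of them lies in~$D$; by~\cref{rem:yi}~\cref{zx} that point is the unique $z\in D^m_l$ with $S_p(z)=x$. The remaining $p-1$ preimages lie outside~$D$, hence carry zero jump, so the displayed relation collapses to $\beta_x=\beta_z$. Running this around the cycle $D^m_l=\{y_0,\dots,y_{m-1}\}$, on which $S_p$ acts as a cyclic permutation, forces $\beta_{y_0}=\cdots=\beta_{y_{m-1}}=:\gamma^m_l\ge 0$. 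I expect this to be the main obstacle, precisely because it is where the combinatorial structure of~$D$ developed in~\cref{lem:D}, \cref{rem:yi}, and~\cref{prop:ContinuouityPoints} must be assembled into the single equality $\beta_x=\beta_z$.

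It then remains to do the bookkeeping. The measure~$\mu^m_l$ corresponding to~$\phi^m_l$ is the uniform probability measure $\frac1m\sum_{y\in D^m_l}\delta_y$ on~$D^m_l$ (also in the level-one case, where it is a single Dirac mass). Putting $\alpha^m_l=m\gamma^m_l\ge 0$, the contribution of~$D^m_l$ to~$\mu$ is $\gamma^m_l\sum_{y\in D^m_l}\delta_y=\alpha^m_l\mu^m_l$, so that $\mu=\sum_{m,l}\alpha^m_l\mu^m_l$, and summing the jumps over all of~$D$ gives $\sum_{m,l}\alpha^m_l=\sum_{m,l}m\gamma^m_l=\sum_x\beta_x=1$. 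Translating back through the linear correspondence~\cref{mu-phi} yields~\cref{jumpSolution}, completing the proof.
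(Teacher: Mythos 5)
Your proof is correct, and while it uses the same structural inputs as the paper's argument---atoms can only sit in $D$ (\cref{prop:ContinuouityPoints}), uniqueness of the preimage in $D$ (\cref{lem:D} and \cref{rem:yi}), and the building blocks $\phi^m_l$---the way you assemble the ``only if'' direction is genuinely different. The paper proceeds by greedy exhaustion: pick one discontinuity, subtract $m_1s_1\phi^{m_1}_{l_1}$, renormalize the remainder back into $\FPSp^j$, and iterate, invoking countability of the discontinuity set to ``complete the proof by induction''---a step that, when infinitely many cycles carry mass, tacitly requires a convergence argument for the residual after countably many peelings. You instead work once and for all at the level of atoms: the identity $\beta_x=\sum_{k=0}^{p-1}\beta_{(x+k)/p}$, obtained by taking right limits in \cref{FESp} and subtracting (a sharpened form of \cref{rem:lim+}, and you are right that the hypothesis $\phi(0+)=0$ is not needed here, since the constant terms cancel before any claim about the subtracted sum is made), collapses on each cycle $D^m_l$ to $\beta_x=\beta_z$ because the remaining $p-1$ preimages lie outside $D$ and carry no jump; cycling around $D^m_l$, on which $S_p$ acts as a cyclic permutation by \cref{lem:Dml}, gives constancy $\gamma^m_l$, and the measure is then written in one shot as $\sum_{m,l}\alpha^m_l\mu^m_l$ with $\alpha^m_l=m\gamma^m_l$, all rearrangements being legitimate by non-negativity. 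Notably, the constancy of the jump along a cycle---which the paper asserts with a bare citation of \cref{lem:D} in the phrase ``for every $x\in D^{m_1}_{l_1}$ we have $\phi(x+)-\phi(x)=s_1$''---is exactly what your $\beta_x=\beta_z$ computation supplies, so your version makes explicit a step the paper leaves implicit, and it avoids the renormalization bookkeeping entirely. Your ``if'' direction matches the paper's (there dismissed as ``easy to check''), with the uniform convergence, interchange of sums, and identification of the corresponding atomic measure spelled out correctly.
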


\begin{proof}
Making use of~\cref{lem:jumpFunction1,lem:jumpFunction2} it is easy to check that the function $\phi$ given by~\cref{jumpSolution} belongs to $\FPSp^\textnormal{j}$.

Let now $\phi\in\FPSp^\textnormal{j}$. Choose $x_0\in D$ with $s_1:=\phi(x_0+)-\phi(x_0)\in(0,1)$. Then there are $m_1\in\mathbb N$ and $l_1\in\mathbb L_{m_1}$ such that $x_0\in D^{m_1}_{l_1}$. By \Cref{lem:D} for every $x\in D^{m_1}_{l_1}$ we have $\phi(x+)-\phi(x)=s_1$. Moreover, $m_1s_1\in \opencl{0}{1}$. Therefore, the function $\phi-m_1s_1\phi^{m_1}_{l_1}$ is non-decreasing, continuous at every point of the set $D^{m_1}_{l_1}$, and $\phi(1)-m_1s_1\phi^{m_1}_{l_1}(1)=1-m_1s_1$. If $m_1s_1=1$, then $\phi=m_1s_1\phi^{m_1}_{l_1}$, and we are done. If $m_1s_1\in(0,1)$, then  $\frac{1}{1-m_1s_1}(\phi-m_1s_1\phi^{m_1}_{l_1})\in\FPSp^\textnormal{j}$ and repeating the same arguments as before we can choose $m_2\in\mathbb N$, $l_2\in\mathbb L_{m_2}$, and $s_2\in(0,1)$ such that the function $\frac{1}{1-m_1s_1}(\phi-m_1s_1\phi^{m_1}_{l_1})-m_2s_2\phi^{m_2}_{l_2}$ is non-decreasing, continuous at every point of the set $D^{m_1}_{l_1}\cup D^{m_2}_{l_2}$, and $\frac{1}{1-m_1s_1}(\phi(1)-m_1s_1\phi^{m_1}_{l_1}(1))-m_2s_2\phi^{m_2}_{l_2}(1)=1-m_2s_2$. Again, if $m_2s_2=1$, then $\phi=m_1s_1\phi^{m_1}_{l_1}+m_2s_2\phi^{m_2}_{l_2}$ and we are done. Otherwise we continue the reasoning. Since the set of all discontinuity points of $\phi$ is countable (see \Cref{prop:ContinuouityPoints}), the proof can be completed by induction.
\end{proof}

From \cref{thm:jamp} we see that there are solutions of equation \cref{FESp} that are discontinuous exactly at the points of the set $D$.


\renewcommand{\theequation}{7\arabic{equation}}\setcounter{equation}{0}
\section{The family \texorpdfstring{$\mathcal P_{S_p}^\textnormal{s}$}{Pₛˢ}}\label{S7}

One can ask if it would be possible to formulate any counterpart of \Cref{thm:PSpa=id} for the family $\FPSp^\textnormal{s}$. Unfortunately, we do not know if, for a continuous and singular $\phi\in\FI$, the sequence $(\T^m_{S_p}\phi)_{m\in\mathbb N}$ converges pointwise, and if so, to which function. We do not even know if, for a continuous (and singular) $\phi\in\FI$, the function $\B_{S_p}^\phi$ is continuous (and singular).
So, we cannot determine members of the family $\FPSp^\textnormal{s}$ by any formula similar to that in~\cref{thm:PSpa=id}.

Fix $\phi\in\FI$. By \cref{thm:BB} we have $\BB_{S_p}^{\phi}\in\FPSp$ or $\BB_{S_p}^{\phi}=\chi_{\{1\}}$. If $\BB_{S_p}^{\phi}\neq\chi_{\{1\}}$, then applying \cref{prop:ASJ} together with \cref{PSpa} and \cref{thm:jamp} we conclude that there are constants $\alpha\in[0,1]$ and $\alpha^m_l\in[0,1]$, for all $m\in\mathbb N$ and $l\in \mathbb L_m$, such that $\alpha+\sum_{m\in\mathbb N}\sum_{l\in\mathbb L_m}\alpha^m_l\in[0,1]$ and
\begin{equation*}
\BB_{S_p}^{\phi}-\alpha\id_{[0,1]}-\sum_{m\in\mathbb N}\sum_{l\in\mathbb L_m}\alpha^m_l\phi^m_l\in\FPSp^\textnormal{s}.
\end{equation*}
All the constants can be easily calculated. As $\BB_{S_p}^{\phi}$ is differentiable almost everywhere, we have (almost everywhere)
\begin{equation*}
\alpha=(\BB_{S_p}^{\phi})',
\end{equation*}
whereas for all $m\in\mathbb N$ and $l\in \mathbb L_m$ we have
\begin{equation*}
\alpha^m_l=m\left[\BB_{S_p}^{\phi}\left(\frac{l}{p^m-1}+\right)-\BB_{S_p}^{\phi}\left(\frac{l}{p^m-1}\right)\right].
\end{equation*}
In consequence, for any $\phi\in\FI$ such that $\BB_{S_p}^{\phi}\neq\chi_{\{1\}}$ the formula
\begin{equation*}
\BB_{S_p}^{\phi}-(\BB_{S_p}^{\phi})'\id_{[0,1]}-\sum_{m\in\mathbb N}m\sum_{l\in\mathbb L_m}\left[\BB_{S_p}^{\phi}\left(\frac{l}{p^m-1}+\right)-\BB_{S_p}^{\phi}\left(\frac{l}{p^m-1}\right)\right]\phi^m_l
\end{equation*}
defines a function belonging to $\FPSp^\textnormal{s}$. Note that $\phi\in\FPSp^\textnormal{s}$ gives $\BB_{S_p}^{\phi}=\phi$, so the above formula holds as $(\BB_{S_p}^{\phi})'=0$ and
$\BB_{S_p}^{\phi}(\frac{l}{p^m-1}+)=\BB_{S_p}^{\phi}(\frac{l}{p^m-1})$ for all $m\in\mathbb N$ and $l\in \mathbb L_m$.

Let us finish this short section with some information on previous results concerning the family $\FPSdwa^\textnormal{s}$. Namely, in~\cite{MorawiecZurcher2018} (see also \cite[Section 5C]{Kairies1997}), it was observed that $\FPSdwa^\textnormal{s}$ contains a large family of strictly increasing and H\"{o}lder continuous functions that are convex combinations of the singular de~Rham functions from \cite{Rham1956}  
(studied earlier in \cite{Cesaro}, \cite{Faber}, and \cite{Salem}). Next in~\cite{MorawiecZurcher2021} more new families of strictly increasing functions belonging to $\FPSdwa^\textnormal{s}$ were found. Among them a family of functions that are not H\"{o}lder continuous. Recently, in~\cite{MorawiecZurcher2024}, it was observed that $\FPSdwa^\textnormal{s}$ also contains a quite large family of Cantor-type functions. All the results from the mentioned papers concerns the family $\FPSdwa^\textnormal{s}$, however some of them can be generalized to the family $\FPSp^\textnormal{s}$.


\section*{Acknowledgement}
The research activities were co-financed by the funds granted under the Research Excellence Initiative of the University of Silesia in Katowice and by the University of Silesia Institute of Mathematics (Iterative Functional Equations and Real Analysis program).

%
%
%


\end{document}